\newtheorem{Theorem}{Theorem}[section]
\newtheorem{Corollary}[Theorem]{Corollary}
\newtheorem{Lemma}[Theorem]{Lemma}
\newtheorem{Proposition}[Theorem]{Proposition}
\newtheorem{Remark}[Theorem]{Remark}
\numberwithin{equation}{section}
\def\mathcenterto#1#2{\mathclap{\phantom{#1}\mathclap{#2}}\phantom{#1}}
\let\old@widetilde\widetilde
\def\widetildeto#1#2{\mathcenterto{#2}{\old@widetilde{\mathcenterto{#1}{#2}}}}
\let\old@widehat\widehat
\def\widehatto#1#2{\mathcenterto{#2}{\old@widehat{\mathcenterto{#1}{#2\,}}}}
\def\widetilde{\widetildeto{K}}
\def\C {\mathbb C}
\def\R {\mathbb R}
\newcommand{\Op}{\operatorname{Op}}
\newcommand{\supp}{\operatorname{supp}}
\newcommand{\<}{\langle}
\renewcommand{\>}{\rangle}
\newcommand{\WF}{\operatorname{WF}}
\newcommand{\id}{\operatorname{Id}}
\newcommand{\bid}{\operatorname{\bf Id}}
\newcommand{\p}{\partial}
\newcommand{\Vol}{\operatorname{Vol}}
\newcommand{\diag}{\operatorname{diag}}
\newcommand{\tr}{\operatorname{tr}}
\renewcommand{\Re}{\operatorname{Re}}
\renewcommand{\Im}{\operatorname{Im}}
\newcommand{\bI}{\mathbf I}
\newcommand{\bN}{\mathbf N}
\newcommand{\cA}{\mathcal A}
\newcommand{\cE}{\mathcal E}
\newcommand{\cG}{\mathcal G}
\newcommand{\cI}{\mathcal I}
\newcommand{\cP}{\mathcal P}
\newcommand{\cS}{\mathcal S}
\newcommand{\tbN}{\widetilde \bN}
\newcommand{\tbI}{\tilde \bI}
\newcommand{\tU}{\widetilde U}
\newcommand{\tM}{\widetilde M}
\begin{document}
\title[The attenuated geodesic ray transform on tensors]{The attenuated geodesic ray transform on tensors: generic injectivity and stability}

\author[Yernat M. Assylbekov]{Yernat M. Assylbekov }
\address{Department of Computational Mathematics, Science and Engineering, Michigan State University, East Lansing, MI 48824, USA}
\email{y\_assylbekov@yahoo.com}

\maketitle

\begin{abstract}
We consider the attenuated geodesic ray transform defined on pairs of symmetric $2$-tensors and $1$-forms on a simple Riemannian manifold. We prove injectivity and stability results for a class of generic simple metrics and attenuations containing real analytic ones. In fact, methods used in this paper can be modified to generalize our results for a class of non-simple manifolds similar to Stefanov-Uhlmann [American Journal of Mathematics, {\bf 130} (1):239--268 (2008)].
\end{abstract}


\section{Introduction and main results}\label{sctn::introduction}

Consider a smooth compact $n$-dimensional Riemannian manifold $(M,g)$ with smooth boundary $\p M$. Let $SM$ be its unit sphere bundle and $\p_\pm SM$ be the set of inward/outward unit vectors on~$\p M$,
$$
\p_\pm SM:=\{(x,v)\in SM:x\in\p M\text{ and }\pm\<v,\nu(x)\>_{g(x)}\ge 0\},
$$
where $\nu$ is the inward unit normal to $\p M$. For a given $(x,v)\in SM$, $\gamma_{x,v}$ is the unique geodesic with $x=\gamma_{x,v}(0)$, $v=\dot\gamma_{x,v}(0)$ and $\tau(x,v)$ is the first positive time when it exits $M$. Throughout the paper, we assume that $(M,g)$ is \emph{\bfseries simple}, meaning that $\p M$ is strictly convex and that any two points on $\p M$ are joined by a unique minimizing geodesic. The notion of simplicity naturally arose in the context of the boundary rigidity problem \cite{michel1981rigidite}. In particular, simplicity implies that $M$ is simply connected and $\tau$ is a bounded function on $SM$.

The \emph{\bfseries attenuated geodesic ray transform} of $f\in C^\infty (SM;\C)$, with attenuation $a\in C^\infty(M;\C)$, is given by
$$
I_{a} f(x,v):=\int_0^{\tau(x,v)} \exp \bigg(\int_0^t a(\gamma_{x,v}(s))\, ds\bigg)f(\gamma_{x,v}(t),\dot\gamma_{x,v}(t))\,dt,\quad (x,v)\in\p_+SM.
$$
It is clear that a general function $f\in C^\infty(SM;\C)$ cannot be determined by its attenuated geodesic ray transform, since $f$ depends on more variables than $I_a f$. Moreover, one can easily see that the functions of the type $Xu$ with $u|_{\p(SM)}=0$ are always in the kernel of $I_a$. However, in applications one often needs to invert the transform $I_a$ acting on functions on $SM$ arising from symmetric tensor fields. Further, we will consider this particular case.

We denote by $S^2_M$ and $\Lambda^1_M$ the bundles of complex-valued symmetric $2$-tensors and $1$-forms on $M$, respectively. For the restrictions of $I_a$ to $C^\infty(M;\C)$, $C^\infty(M;\Lambda^1_M)$ and $C^\infty(M;S^2_M)$ we will use the notations $I_a^0$, $I_a^1$ and $I_a^2$, respectively. By $\bI_{a}$ we denote the following operator
$$
\bI_{a} [f,\alpha]:=I_a^2 f+I_a^1 \alpha,\qquad [f,\alpha]\in C^\infty(M;S^2_M\times \Lambda^1_M).
$$
The domain of $\bI_a$ can be extended to $L^2(M;S^2_M{\times}\Lambda^1_M)$; see Section~\ref{sec:adjoints} for details. We also define $\cI_a$ as $\cI_{a} [w,\phi]:=I_a^1 w+I_a^0 \phi$ for $[w,\phi]\in C^\infty(M;\Lambda^1_M\times\C)$. Then $\cI_a$ is particular case of $\bI_a$ since, for $[\phi,\alpha]\in C^\infty(M;\C\times\Lambda^1_M)$, one can write $\cI_a[w,\phi]:=\bI_a[\phi g,w]$.

Various cases of $\cI_a$ have applications in imaging techniques such as SPECT \cite{budinger1979emission} and Doppler tomography \cite{juhlin1992principles}. It also appeared in the context of anisotropic inverse conductivity problem of Calder\'on \cite{ferreira2009limiting} on so-called \emph{\bfseries admissible manifolds}, i.e. compact Riemannian manifolds with boundary which are conformally embedded in a product of the Euclidean line and a simple manifold. In \cite{ferreira2009limiting}, unique determination of the conductivity from the boundary measurements was reduced to injectivity of $\cI_a$. In a similar way the latter is related to inverse problems for other elliptic equations and systems \cite{assylbekov2017polyharmonicadmissible,kenig2011inverse,krupchyk1702inverse,krupchyk2017inverse} including nonlinear ones \cite{assylbekov2017kerrinverse}. The transform $\bI_a$ arises in several problems as well. Namely, boundary and lens rigidity problems \cite{sharafutdinov1994integral,stefanov2008microlocal,stefanov2008boundary} and inverse boundary value problems for the Hodge Laplacian \cite{chung2017hodge}. In forthcoming works we demonstrate two applications of $\bI_a$. In the first one, it will be illustrated that unique determination of coefficients of polyharmonic operators with second order perturbation from Dirichlet-to-Neumann on admissible manifolds can be reduced to injectivity of $\bI_a$, generalizing results of \cite{ghosh2017inverse}. In the second one, we will show application of $\bI_a$ in the linearized anisotropic Calder\'on's problem posed in \cite{sharafutdinov2007variations}. We believe that $\bI_a$ will find applications in other inverse boundary value problems as well.

The problem of injectivity of $\bI_a$ has a natural obstruction. Indeed, the kernel of $\bI_a$ has a non-trivial elements, since, as one can easily see, $\bI_ad_a[w,\phi]=0$ for all $[w,\phi]\in C^\infty(M;\Lambda^1_M\times\C)$ with $[w,\phi]|_{\p M}=0$, where
$$
d_a[w,\phi]:=[d^s w+a\phi g,d\phi+aw],\quad (d^s w)_{ij}:=(\nabla_i w_j+\nabla_j w_i)/2.
$$
We say that $\bI_a$ is \emph{$s$-injective} if these are the only elements of the kernel. Then the inverse problem we consider is whether $\bI_a$ is $s$-injective.

In the case $a=0$, the problem is known as the \emph{\bfseries tensor tomography problem} which received considerable interest \cite{paternain2013tensor,paternain2015invariant,sharafutdinov1994integral,stefanov2004stability,stefanov2005boundary,stefanov2008nonsimple,stefanov2014inverting,uhlmann2016inverse}. The latter problem consists of determining a tensor field from its geodesic ray transform (with no attenuation). The reader is referred to the survey articles \cite{paternain2014tensor,stefanov2008microlocal,stefanov2008boundary} for the most recent developments in this direction. For $a\not\equiv 0$, this problem was studied in two dimensions. On simple surfaces, $s$-injectivity was proven in \cite{ainsworth2013attenuated} (see Remark~7.5 therein) following \cite{paternain2012attenuated,paternain2013tensor}. Inversion formulas/procedure were given on Euclidean unit disc \cite{monard2017efficient} and on simple surfaces \cite{monard2016inversion}. Range characterization of $\bI_a$ was studied in Euclidean case \cite{sadiq2016tensortransform} and on simple surfaces \cite{ainsworth2015range}.

In the present paper, we are interested in proving injectivity results and stability estimates for the transform $\bI_a$. Focusing in the real-analytic setting, we use analytic microlocal analysis which was developed in \cite{stefanov2004stability,stefanov2005boundary,stefanov2008nonsimple} for the tensor tomography. This method, which goes back to Guillemin and Sternberg \cite{guillemin1979some}, led to many injectivity results of various types of ray transforms in the real-analytic category \cite{abhishek2017support,dairbekov2007boundary,frigyik2008x,holman2013generic,holman2010doppler,zhou2017generic}.

We now state the main results and give an outline of the remainder of the article. Our first main result is the following injectivity result for $\bI_a$.

\begin{Theorem}\label{thm::main 1}
Let $(M,g)$ be a real analytic simple manifold. Suppose that $a:M\to\C$ is real analytic. Then $\bI_a$ is $s$-injective.
\end{Theorem}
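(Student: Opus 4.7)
The plan is to adapt the analytic microlocal strategy of Stefanov--Uhlmann, as indicated in the introduction. Let $[f,\alpha]\in L^2(M;S^2_M\times\Lambda^1_M)$ lie in the kernel of $\bI_a$. The first step is to embed $(M,g)$ real-analytically into a slightly larger simple real-analytic manifold $(\tM,g)$, extend $a$ real-analytically across $\p M$, and extend $[f,\alpha]$ by zero outside $M$. One then introduces the normal operator $\tbN_a := \bI_a^* \bI_a$ acting on pairs defined on $\tM$, and checks that it is a classical pseudodifferential operator of order $-1$ by writing its Schwartz kernel via a boundary parametrization of geodesics and performing the standard change of variables. Because $g$ and $a$ are real-analytic, the resulting amplitude is a real-analytic function of the base point away from the diagonal, and $\tbN_a$ is in fact an analytic $\Psi$DO in the sense of Sj\"ostrand.

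The second step is to set up a solenoidal/potential decomposition adapted to the operator $d_a$: one writes $[f,\alpha]=[f^s,\alpha^s]+d_a[w,\phi]$ with $[w,\phi]|_{\p M}=0$, where the solenoidal part $[f^s,\alpha^s]$ is characterized by a gauge condition $\delta_a [f^s,\alpha^s]=0$ corresponding to the formal adjoint of $d_a$. Since $\bI_a d_a=0$ on zero-boundary potentials, the task reduces to showing $[f^s,\alpha^s]=0$. For this one computes the principal symbol of $\tbN_a$ as a matrix acting on the fibers of $S^2_M\times\Lambda^1_M$ and shows that it is elliptic on the symbolic image of the solenoidal subspace; the attenuation enters only through lower order terms, so the computation is a perturbation of the unattenuated case treated in Stefanov--Uhlmann. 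Analytic ellipticity then follows from the analyticity of $g$ and $a$, and yields an analytic parametrix of $\tbN_a$ composed with the solenoidal projection, modulo an analytic-regularizing operator.

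Finally, because $\bI_a[f,\alpha]=0$, one has $\tbN_a[f,\alpha]\equiv 0$ on $\tM\setminus\bar M$: the geodesics used to compute $\tbN_a$ at an exterior point all enter and exit $M$, and the relevant boundary integrals are precisely values of $\bI_a[f,\alpha]$. Feeding this into the analytic parametrix on the solenoidal sector, together with the fact that $[f^s,\alpha^s]$ vanishes outside $\bar M$ by construction, shows that its analytic wavefront set is contained in the conormal to $\p M$; an analytic unique-continuation argument (Sato--Kawai--Kashiwara / H\"ormander) propagates the vanishing inward, giving $[f^s,\alpha^s]\equiv 0$ on $M$ and hence $[f,\alpha]=d_a[w,\phi]$ with $[w,\phi]|_{\p M}=0$. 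The hard step will be the second one: constructing the $d_a$-adapted solenoidal decomposition and verifying analytic ellipticity of $\tbN_a$ on this subspace, because $\bI_a$ mixes $2$-tensors and $1$-forms, so the principal symbol is a matrix of size $\bigl(\tfrac{n(n+1)}{2}+n\bigr)\times\bigl(\tfrac{n(n+1)}{2}+n\bigr)$ whose kernel must be identified exactly with the symbol of $d_a$-potentials via a careful linear-algebraic computation with the right gauge fixing.
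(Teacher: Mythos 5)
Your interior argument is plausible (it follows the older Stefanov--Uhlmann route of showing the normal operator is an analytic $\Psi$DO, whereas the paper uses the complex stationary phase method with the special cut-offs $\chi_N$; either can in principle deliver $\WF_A(\cS_a[f,\alpha])\cap T^*M^{\rm int}=\varnothing$, provided you also feed in the solenoidal gauge $\delta_a[f,\alpha]=0$, since $\sigma_p(\tbN_a)$ alone is not injective). The genuine gap is the endgame at $\p M$. You assert that ``$[f^s,\alpha^s]$ vanishes outside $\bar M$ by construction'' and then invoke analytic unique continuation to propagate vanishing inward. But the solenoidal part of the zero extension, $\cS_a\cE_{\widetilde M}[f,\alpha]=\cE_{\widetilde M}[f,\alpha]-d_a[w_{\widetilde M},\phi_{\widetilde M}]$, does \emph{not} vanish on $\widetilde M\setminus M$: it equals $-d_a[w_{\widetilde M},\phi_{\widetilde M}]$ there, which is nonzero in general. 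So there is no open set on which the function you want to continue is known to vanish, and ``microlocal analyticity everywhere'' by itself implies analyticity, not vanishing. Moreover the zero extension is not $a$-solenoidal across $\p M$ (its $\delta_a$ has a surface distribution on $\p M$), so your microlocal result cannot be applied at conormal directions $N^*(\p M)$ anyway; those correspond to geodesics tangent to $\p M$ and are exactly the directions that cannot be handled by the interior ellipticity argument.

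What is missing is the entire boundary analysis that occupies most of the paper's proof: (i) analyticity of $\cS_a[f,\alpha]$ \emph{up to} $\p M$, obtained by showing $[w_{\widetilde M},\phi_{\widetilde M}]$ solves an elliptic boundary value problem with analytic data and is analytic up to $\p\widetilde M$, then transporting its trace along geodesics in $\widetilde M\setminus M$ (Lemma~\ref{lemma::If=0 implies f is analytic}); (ii) a gauge normalization $[\tilde f,\tilde\alpha]=[f,\alpha]-d_a[w,\phi]$ with $[w,\phi]|_{\p M}=0$ solving an ODE system in boundary normal coordinates so that $\tilde f_{in}=\tilde\alpha_n=0$, together with the observation that $\tbN_a$ restricted to that gauge is elliptic for $\xi_n\neq 0$, which removes $N^*(\p M)$ from the (smooth) wavefront set and forces the full jet of $[\tilde f,\tilde\alpha]$ to vanish on $\p M$ (Lemma~\ref{lemma::modifying f to have zero jet}); and (iii) since $[\tilde f,\tilde\alpha]$ is analytic near $\p M$ and flat there, it vanishes in a collar, so $[f,\alpha]=d_a[w,\phi]$ near $\p M$, after which one must analytically continue the \emph{potential} $[w,\phi]$ from the collar into all of $M$ to conclude $[f,\alpha]=d_a[w_0,\phi_0]$ globally, and only then does $[f,\alpha]=\cS_a[f,\alpha]$ give $[f,\alpha]=0$. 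A secondary inaccuracy: the attenuation does not enter only at lower order --- the principal symbol of $\tbN_a$ carries the positive weight $\widetilde U_{-2\Re(a)}(x,\omega)$; ellipticity survives because the weight is positive, not because $a$ is invisible at top order.
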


This result is based on the complex stationary phase method of Sj\"ostrand~\cite{sjostrand1982singularites}, which was already used in \cite{frigyik2008x,holman2013generic,holman2010doppler,stefanov2008nonsimple,zhou2017generic}. Theorem~\ref{thm::main 1} can be considered as a generalization of the corresponding result in \cite{stefanov2005boundary}. For $\cI_a$, analogous results are given in \cite{frigyik2008x,holman2010doppler,zhou2017generic}.

We also give a stability estimate for $\bI_a$ in terms of its normal operator following \cite{stefanov2008sharp,stefanov2004stability}. To state this result, let us embed $M$ into the interior of a compact manifolds $\widetilde M$ with boundary and extend the metric $g$ to $\widetilde M$ and keep the same notation for the extension, choosing $(\widetilde M,g)$ to be sufficiently close to $(M,g)$ so that it remains simple. We also extend the attenuation coefficient $a$ to $\widetilde M$ smoothly and keep the same notation for the extension.

We denote by $\tbI_a$ the attenuated geodesic ray transform on $\widetilde M$. Then the \emph{\bfseries normal operator} is defined as $\tbN_{a}:=(\tbI_a)^*\tbI_a$. Let $\mathcal E_{\widetilde M}$ be the operator which extends all pairs on $M$ to $\widetilde M\setminus M$ by zero. In this way, we can and shall consider $\tbI_a$ and $\tbN_{a}$ acting on pairs on $M$ as $\tbI_a:=\tbI_a \mathcal E_{\widetilde M}$ and $\tbN_{a}:=\tbN_{a}\mathcal E_{\widetilde M}$. As it was pointed out in \cite[Section~2]{zhou2017generic}, the knowledge of $\bI_a$ is equivalent to that of~$\tbI_a$.

We show in Section~\ref{sctn::space of pairs} that every $[f,\alpha]\in L^2(M;S^{2}_M{\times} \Lambda^{1}_M)$ can be uniquely decomposed as
$$
[f,\alpha]=[h,\beta]+d_a[w,\phi],
$$
with $[h,\beta]\in L^2(M;S^{2}_M{\times} \Lambda^{1}_M)$ and $[w,\phi]\in H^{1}_0(M;\Lambda^{1}_M{\times} \C)$ such that $\delta_a[h,\beta]=0$. Here and in what follows, $-\delta_a$ is the formal adjoint of $d_a$ under the $L^2$-inner product. We also write $\cS_{a}[f,\alpha]:=[h,\beta]$, which turns out to be a bounded operator $L^2(M;S^2_M\times \Lambda^1_M)\to L^2(M;S^2_M\times \Lambda^1_M)$.

Our second result is on stability estimates for $\bI_a$ in terms of the normal operator $\tbN_{a}$.

\begin{Theorem}\label{thm::main 2}
Let $(M,g)$ be a simple manifold and let $a\in C^\infty(M;\C)$. Suppose $\bI_a$ is $s$-injective.

\begin{itemize}
\item[(a)] There is a constant $C>0$ such that
\begin{equation}\label{main stability estimate}
\|\cS_a[f,\alpha]\|_{L^2(M;S^2_M\times \Lambda^1_M)}/C\le\|\widetilde\bN_a[f,\alpha]\|_{H^1(\widetilde M;S^2_{\widetilde M}\times \Lambda^1_{\widetilde M})}\le C\|\cS_a[f,\alpha]\|_{L^2(M;S^2_M\times \Lambda^1_M)}
\end{equation}
for all $[f,\alpha]\in L^2(M;S^2_M\times \Lambda^1_M)$.\smallskip

\item[(b)] There is $\varepsilon>0$ so that if $(g,a)$ is replaced by $(\tilde g,\tilde a)$ satisfying $\|g-\tilde g\|_{C^3(\widetilde M; S^2_{\widetilde M})}\le \varepsilon$ and $\|a-\tilde a\|_{C^3(\widetilde M; \C)}\le \varepsilon$, the estimate \eqref{main stability estimate} remains true. Moreover, the constant $C>0$ is uniform, depending only on $(g,a)$.
\end{itemize}
\end{Theorem}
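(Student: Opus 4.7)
The plan is to follow the Stefanov--Uhlmann paradigm developed in \cite{stefanov2004stability,stefanov2008nonsimple}: show that $\widetilde\bN_a$ is a classical pseudodifferential operator of order $-1$ whose principal symbol is elliptic on the solenoidal subspace, invert it modulo compacts, and then use $s$-injectivity to absorb the compact remainder. Because the attenuation $a$ enters only through the smooth exponential weight $\exp(\int a\,ds)$, the principal symbol of $\widetilde\bN_a$ should coincide, up to a nonvanishing scalar, with that of the unattenuated normal operator, so the ellipticity analysis of \cite{stefanov2004stability} carries over with only minor modifications to accommodate the pair structure of Section~\ref{sctn::space of pairs}. The upper bound in \eqref{main stability estimate} is the easy direction: from $\bI_a d_a[w,\phi] = 0$ for $[w,\phi]|_{\p M} = 0$ (whose zero extension to $\widetilde M$ remains in the kernel of $\widetilde\bI_a$) one obtains $\widetilde\bN_a[f,\alpha] = \widetilde\bN_a\cS_a[f,\alpha]$, and continuity of $\widetilde\bN_a \colon L^2 \to H^1$ finishes it.

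For the lower bound, I would build a parametrix $P$ of order $+1$ with
\begin{equation*}
P\widetilde\bN_a \cS_a = \cS_a + K,
\end{equation*}
where $K$ is compact on $L^2(M;S^2_M\times\Lambda^1_M)$. This yields $\|\cS_a[f,\alpha]\|_{L^2} \le C\|\widetilde\bN_a[f,\alpha]\|_{H^1} + \|K[f,\alpha]\|_{L^2}$. To absorb the compact error, suppose the estimate fails along a sequence $\{[f_n,\alpha_n]\}$ with $\cS_a[f_n,\alpha_n] = [f_n,\alpha_n]$, unit $L^2$-norm, and $\widetilde\bN_a[f_n,\alpha_n] \to 0$ in $H^1$. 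Weak compactness together with compactness of $K$ extracts a strongly convergent subsequence with nonzero solenoidal limit $[f,\alpha]$ satisfying $\widetilde\bN_a[f,\alpha] = 0$. Then $\|\widetilde\bI_a[f,\alpha]\|^2 = \langle \widetilde\bN_a[f,\alpha],[f,\alpha]\rangle = 0$, so $[f,\alpha]\in\ker \bI_a$; by $s$-injectivity $[f,\alpha]$ must be a potential $d_a[w,\phi]$, contradicting its solenoidal character via uniqueness of the decomposition. The main technical obstacle is the parametrix construction itself, where the principal symbol must be inverted on the pair-solenoidal subspace---a step that couples the $2$-tensor component and the $1$-form component through $d_a$ and $\delta_a$, and so does not reduce verbatim to either the pure tensor tomography of \cite{stefanov2004stability} or the vector-plus-scalar case of $\cI_a$ treated in \cite{frigyik2008x,holman2010doppler}.

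For part (b), the parametrix $P_{g,a}$ and the compact remainder $K_{g,a}$ depend continuously on $(g,a)$ in the $C^3$-topology, and the solenoidal projections $\cS_{\tilde a}$ vary continuously through solutions of an elliptic boundary-value problem with coefficients smooth in $(g,a)$. Applying the parametrix $P_{g,a}$ to $\widetilde\bN_{\tilde a}\cS_{\tilde a}$ gives $\cS_{\tilde a} + K_{g,a} + E_{g,a,\tilde g,\tilde a}$, where $\|E_{g,a,\tilde g,\tilde a}\|_{L^2\to L^2}$ is controlled by $\|g-\tilde g\|_{C^3}+\|a-\tilde a\|_{C^3}$. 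For $\varepsilon$ small enough, $E_{g,a,\tilde g,\tilde a}$ is absorbed into the parametrix term, and the compact-absorption argument of part (a) then yields \eqref{main stability estimate} for $(\tilde g,\tilde a)$ with a constant depending only on the original data $(g,a)$.
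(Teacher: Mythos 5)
Your part (a) is, in outline, the paper's own argument: ellipticity of $\tbN_a$ on $a$-solenoidal pairs, a parametrix identity $Q\tbN_a=\cS_a+K$ with $K$ of order $-1$, and then compactness--uniqueness using $s$-injectivity (the paper packages the absorption step as Proposition~\ref{prop::apriori estimate for N} plus \cite[Proposition~5.3.1]{taylor1981pseudodifferential}). But be aware that the step you defer as ``the main technical obstacle'' is not only the symbol inversion on the solenoidal subspace: the harder issue is that $\cE_{\widetilde M}\cS_a[f,\alpha]$ is \emph{not} $a$-solenoidal on $\widetilde M$ (its $\delta_a$ has a distributional layer on $\p M$), so the parametrix identity from the elliptic system $\diag(d_a\Lambda\delta_a,\tbN_a)$ does not apply to it directly. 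The paper spends most of Section~\ref{sctn::stability result} on exactly this: passing through an intermediate manifold $M_1$, comparing $\cS_a\cE_{\widetilde M}$, $\cS_a\cE_{M_1}$ and $\cS_a$, and recovering the boundary trace $[w,\phi]|_{\p M}$ of the potential part by integrating $d_a[w,\phi]$ along geodesics in $M_1\setminus M$ (Proposition~\ref{prop::apriori estimate for N}). Your sketch of $P\tbN_a\cS_a=\cS_a+K$ with $K$ compact silently assumes this is done.

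Part (b) contains a genuine gap. After absorbing the perturbation error $E_{g,a,\tilde g,\tilde a}$ you are left with an estimate of the form $\|\cS_{\tilde g,\tilde a}u\|\le C\big(\|\tbN_{\tilde g,\tilde a}u\|_{H^1}+\|K_{g,a}u\|\big)$, and you propose to remove the compact term by ``the compact-absorption argument of part (a).'' That argument requires injectivity of $\tbN_{\tilde g,\tilde a}$ on $\tilde a$-solenoidal pairs, i.e.\ $s$-injectivity of $\bI_{\tilde a}$ for the metric $\tilde g$ --- which is precisely what is \emph{not} assumed in part (b) (indeed, part (b) is what later yields openness of the set of $s$-injective $(g,a)$ in the Corollary, so assuming it would be circular). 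Moreover, the contradiction argument produces a constant with no control in $(\tilde g,\tilde a)$, whereas the statement demands a constant depending only on $(g,a)$. The paper avoids both problems by never re-running the absorption for the perturbed data: it applies the \emph{finished} part-(a) estimate for $(g,a)$ to the pair $\cS_{\tilde g,\tilde a}[f,\alpha]$, and then swaps $\cS_{g,a}$ for $\cS_{\tilde g,\tilde a}$ and $\tbN_{g,a}$ for $\tbN_{\tilde g,\tilde a}$ using the operator-norm continuity of Propositions~\ref{small perturbation of Delta, S and P} and~\ref{N and N' are close}, absorbing only $O(\varepsilon)$ errors. You should restructure part (b) along these lines.
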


This generalizes stability estimates which were proven in \cite{stefanov2005boundary}. Similar results were obtained in \cite{frigyik2008x,holman2010doppler,salo2011attenuated,zhou2017generic} for $\cI_a$.

Combining Theorem~\ref{thm::main 1} and Theorem~\ref{thm::main 2}, one can see that $\bI_a$ is $s$-injective for a generic set of simple metrics and attenuations.

\begin{Corollary}
There exists an open dense set of $(g,a)$ with $(M,g)$ simple so that $\bI_a$ is $s$-injective and \eqref{main stability estimate} holds.
\end{Corollary}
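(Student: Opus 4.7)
The plan is to establish that the subset of pairs $(g,a)$ for which $\bI_a$ is $s$-injective is both open and dense inside the space of simple pairs, working in the $C^3$ topology (the regularity level appearing in Theorem~\ref{thm::main 2}(b)). Let $\cG$ be the set of pairs $(g,a)$ with $g$ a smooth metric on $M$, $a\in C^\infty(M;\C)$, and $(M,g)$ simple; since strict convexity of $\p M$ and absence of conjugate points are both $C^2$-stable properties of $g$, the set $\cG$ is open. Let $\cU\subset\cG$ denote the subset for which $\bI_a$ is $s$-injective. By Theorem~\ref{thm::main 2}(a) every element of $\cU$ automatically satisfies \eqref{main stability estimate}, so it suffices to prove that $\cU$ is open and dense in $\cG$.

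For \emph{density}, I would use that $M$ admits a real-analytic structure (Whitney/Morrey--Grauert) in which real-analytic tensor fields and functions are dense in their smooth counterparts in the $C^3$ topology. Given $(g,a)\in\cG$, this provides a sequence of real-analytic pairs $(g_k,a_k)\to(g,a)$ in $C^3$. By $C^2$-stability of simplicity, $(M,g_k)$ is simple for all large $k$, and Theorem~\ref{thm::main 1} then yields $s$-injectivity of the associated $\bI_{a_k}$. Hence $(g_k,a_k)\in\cU$ eventually, establishing density.

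For \emph{openness}, fix $(g,a)\in\cU$ and invoke Theorem~\ref{thm::main 2}(b): there exists $\varepsilon>0$ such that for every pair $(\tilde g,\tilde a)$ with $\|g-\tilde g\|_{C^3}+\|a-\tilde a\|_{C^3}\le\varepsilon$, the estimate \eqref{main stability estimate} holds with a uniform constant for the operators built from $(\tilde g,\tilde a)$. If $\bI_{\tilde a}[f,\alpha]=0$, then $\tbN_{\tilde a}[f,\alpha]=0$, so the lower bound in \eqref{main stability estimate} forces $\cS_{\tilde a}[f,\alpha]=0$. By the decomposition recalled just before Theorem~\ref{thm::main 2}, this is exactly the statement that $[f,\alpha]=d_{\tilde a}[w,\phi]$ with $[w,\phi]|_{\p M}=0$, i.e.\ $\bI_{\tilde a}$ is $s$-injective. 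Hence $(\tilde g,\tilde a)\in\cU$, and $\cU$ is open.

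The only subtle step is the implication ``the stability estimate in Theorem~\ref{thm::main 2}(b) $\Rightarrow$ $s$-injectivity of the perturbed pair,'' which is what converts a conditional stability statement into openness of $\cU$. This implication is immediate once the solenoidal/potential decomposition preceding Theorem~\ref{thm::main 2} is in hand for the perturbed pair $(\tilde g,\tilde a)$; everything else is bookkeeping about $C^2$-stability of simplicity and density of real-analytic tensor fields in $C^3$.
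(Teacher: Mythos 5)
Your argument is correct and is exactly the one the paper intends: the paper states the corollary follows by ``combining Theorem~\ref{thm::main 1} and Theorem~\ref{thm::main 2}'' without writing out the details, and your write-up supplies precisely those details --- density via approximation by real-analytic $(g,a)$ together with Theorem~\ref{thm::main 1}, and openness via Theorem~\ref{thm::main 2}(b) plus the observation that vanishing of $\widetilde{\bN}_{\tilde a}[f,\alpha]$ and the lower bound in \eqref{main stability estimate} force $\cS_{\tilde a}[f,\alpha]=0$, i.e.\ $s$-injectivity. No gaps.
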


The arguments of this paper also apply to generalize the presented results in several directions:
\begin{itemize}
\item For other types of attenuations including matrix-valued ones and linearly-dependent on direction. Such attenuations play an important role in differential geometry and physics (linear connections and Higgs fields); see \cite{guillarmou2016negconnections,paternain2012attenuated,paternain2016geodesic,zhou2017generic} and references therein. In the current paper, we restrict our attention just to scalar-valued attenuations which depend on position only, since this case appears most in applications~\cite{budinger1979emission,chung2017hodge,ferreira2009limiting,juhlin1992principles}. 

\item For $I_a$ acting on tensor fields of any rank, after some minor adjustments in the proofs; see Remark~\ref{rmk::Korn}. For ease of notation and readability, we have limited ourselves to $\bI_a$.

\item For a class of non-simple compact manifolds as in \cite{frigyik2008x,stefanov2008nonsimple}. Such a manifold allows conjugate points and trapped geodesics, and have boundary which is not necessarily convex. The integration is then taken over non-trapped geodesics only, so the given data is incomplete. More precisely, the assumption is that the union of the conormal bundles of nontrapping geodesics without conjugate points cover $T^*M$. If $n=2$, this condition guarantees the absence of conjugate points but not the absence of trapped geodesics.
\end{itemize}

Finally, we mention the recent breakthrough in \cite{uhlmann2016inverse}, where the local injectivity of $I_0^0$ was proved near a point $p\in\p M$, provided that $n\ge 3$ and $\p M$ is strictly convex near $p$. Their approach is based on the scattering calculus of Melrose \cite{melrose1994spectral} and the requirement $n\ge 3$ is needed to guarantee ellipticity of the normal operator near $p$. This result was further used to prove the global injectivity of $I_0^0$ when the manifold $(M,g)$ is globally foliated by strictly convex hypersurfaces; see \cite{uhlmann2016inverse}. This method was later adjusted to prove analogous local and global results for $I_0^1$ and $I_0^2$ in \cite{stefanov2014inverting}, and for $\cI_a$ in \cite{paternain2016geodesic}. It is likely that the approach used in these papers could be extended to $\bI_a$ with some modifications. We reserve this for future work.

This paper is organized as follows. In Section~\ref{sctn::space of pairs}, we present some important notions and properties of the space of pairs [$2$-tensor, $1$-form]. In Section~\ref{sctn::normal operator}, we study the normal operator $\tbN_a$. We show that it is a pseudodifferential operator of order $-1$, which turns out to be elliptic on pairs $[f,\alpha]$ with $\delta_a[f,\alpha]=0$. Using the ellipticity, we then construct a parametrix for $\tbN_a$. Section~\ref{sctn::stability result} contains the proof of Theorem~\ref{thm::main 2}. Finally, we prove Theorem~\ref{thm::main 1} in Section~\ref{sctn::injectivity result}.

\subsection*{Acknowledgements} The author is grateful to Professor Plamen Stefanov for his suggestions on an earlier version of this paper. The work was partially supported by AMS-Simons travel grant.

\section{The spaces of pairs}\label{sctn::space of pairs}
In what follows, we use the same notation for a pair $[f,\alpha]\in C^\infty(M;S^2_M\times\Lambda^1_M)$ and the induced function $[f,\alpha](x,v):=f_{ij}(x)\,v^i v^j+\alpha_j(x)\,v^j$ on $SM$ leaving it clear from the context when we mean $[f,\alpha]$ to induce a function on $SM$.

The inner products in the spaces $L^2(M;S^2_M\times \Lambda^1_M)$ and $L^2(M;\Lambda^1_M\times\C)$ are given by
\begin{align*}
([f,\alpha],[h,\beta])_{L^2(M;S^2_M\times \Lambda^1_M)}&=\int_M \<f,\overline{h}\>_g+\<\alpha,\overline{\beta}\>_g\,d\Vol_g,\quad [f,\alpha],[h,\beta]\in L^2(M;S^2_M\times \Lambda^1_M),&\\
([u,\varphi],[w,\phi])_{L^2(M;\Lambda^1_M\times\C)}&=\int_M \<u,\overline{w}\>_g+\varphi\overline \phi\,d\Vol_g,\qquad\,\, [u,\varphi],[w,\phi]\in L^2(M;\Lambda^1_M\times\C),&
\end{align*}
where $d\Vol_g$ is the volume form on $(M,g)$. Assume that $a\in C^\infty(M,\C)$. Consider the following operators $d_a:H^1(M;\Lambda^1_M\times\C)\to L^2(M;S^2_M\times \Lambda^1_M)$ and $\delta_a: H^1(M;S^2_M\times \Lambda^1_M)\to L^2(M;\Lambda^1_M\times\C)$ defined by
\begin{align*}
d_a[w,\phi]&=[d^sw+a\phi g,d\phi+aw],\,\,\quad [w,\phi]\in H^1(M;\Lambda^1_M\times\C),\\
\delta_a[f,\alpha]&=[\delta f-\overline a\alpha,\delta \alpha-\overline a\tr(f)],\quad [f,\alpha]\in H^1(M;S^2_M\times \Lambda^1_M),
\end{align*}
where $\tr(f)=g^{ij}f_{ij}$ in local coordinates. The following integration by parts formula holds for these operators
$$
(\delta_a[f,\alpha],[w,\phi])_{L^2(M;\Lambda^1_M\times\C)}+([f,\alpha],d_a[w,\phi])_{L^2(M;S^2_M\times \Lambda^1_M)}=-\int_{\p M}\<j_\nu f,\overline w\>_g+j_\nu\alpha\,\overline\phi\,d\sigma_{\p M}
$$
where $d\sigma_{\p M}$ is the volume form on the boundary $\p M$ induced by $d\Vol_g$, and $j_\nu f:=(\nu^jf_{ij})$ and $j_\nu \alpha:=\nu^j\alpha_j$ in local coordinates. In particular, $d_a^*=-\delta_a$.

\begin{Proposition}\label{decomposition of pairs}
Let $a\in C^\infty(M;\C)$. For a given $[f,\alpha]\in L^2(M;S^2_M\times \Lambda^1_M)$ there are unique $[h,\beta]\in L^2(M;S^2_M\times \Lambda^1_M)$ and $[w,\phi]\in H^{1}_0(M;\Lambda^1_M\times \C)$ such that
$$
[f,\alpha]=[h,\beta]+d_a[w,\phi]\quad\text{and}\quad \delta_a[h,\beta]=0.
$$
Moreover, the projection operators $\cS_{a}[f,\alpha]:=[h,\beta]$ and $\cP_{a}[f,\alpha]:=d_a[w,\phi]$ are bounded from $L^2(M;S^2_M\times \Lambda^1_M)\to L^2(M;S^2_M\times \Lambda^1_M)$.
\end{Proposition}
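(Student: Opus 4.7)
The plan is to recast the existence of the decomposition as an elliptic Dirichlet problem for $P:=-\delta_a d_a=d_a^*d_a$ and to verify its invertibility by a transport-equation argument on $SM$. Given $[f,\alpha]\in L^2(M;S^2_M\times\Lambda^1_M)$, I would seek $[w,\phi]\in H^1_0(M;\Lambda^1_M\times\C)$ satisfying
\[
(d_a[w,\phi],d_a[u,\psi])_{L^2}=([f,\alpha],d_a[u,\psi])_{L^2}\qquad\text{for every }[u,\psi]\in H^1_0.
\]
Granted such $[w,\phi]$, the pair $[h,\beta]:=[f,\alpha]-d_a[w,\phi]$ satisfies $(d_a[u,\psi],[h,\beta])_{L^2}=0$ for every test pair; by the integration-by-parts formula stated just above (no boundary terms, since $[u,\psi]|_{\p M}=0$) this is precisely the distributional identity $\delta_a[h,\beta]=0$.

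\textbf{Ellipticity and Fredholm framework.} Since multiplication by $a$ is of order zero, the principal symbol of $P$ coincides with that of the direct sum $-\delta d^s\oplus(-\delta d)$ on $\Lambda^1_M\oplus\C$; the first summand is elliptic with positive definite symbol (the standard Korn computation) and the second is the scalar Laplacian. G\aa rding's inequality together with the Dirichlet condition therefore realizes $P\colon H^1_0\to H^{-1}$ as a Fredholm operator of index zero.

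\textbf{Trivial kernel via transport.} The crux is to show $\ker d_a\cap H^1_0=\{0\}$: once this holds, pairing any $[w,\phi]\in\ker P\cap H^1_0$ with itself gives $\|d_a[w,\phi]\|_{L^2}^2=0$, so $P$ is injective and hence an isomorphism. To verify the kernel claim, assume $d_a[w,\phi]=0$ and form the induced function $u(x,v):=\phi(x)+w_j(x)v^j$ on $SM$. Using $g_{ij}v^iv^j=1$ on $SM$, a direct computation gives $(X+a)u=d_a[w,\phi]=0$ on $SM$, while $u|_{\p SM}=0$ by the Dirichlet hypothesis. Simplicity of $(M,g)$ ensures that every maximal orbit of $X$ exits $M$ in finite time, so integrating the linear first-order ODE $(X+a)u=0$ along each geodesic forces $u\equiv 0$; being polynomial of degree one in $v$, $u$ can vanish on $SM$ only if $w\equiv 0$ and $\phi\equiv 0$.

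\textbf{Conclusion and main obstacle.} The functional $[u,\psi]\mapsto([f,\alpha],d_a[u,\psi])_{L^2}$ is bounded on $H^1_0$ with norm $\le C\|[f,\alpha]\|_{L^2}$, so the inverse of $P$ produces a unique $[w,\phi]\in H^1_0$ with $\|[w,\phi]\|_{H^1}\le C\|[f,\alpha]\|_{L^2}$. Setting $[h,\beta]:=[f,\alpha]-d_a[w,\phi]$ then yields the decomposition, the $L^2\to L^2$ boundedness of $\cS_a$ and $\cP_a$, and uniqueness (immediate from $\ker d_a\cap H^1_0=\{0\}$). The only nonstandard step is the transport argument establishing triviality of $\ker d_a$; this is where the simple geometry of $(M,g)$ enters essentially, and it would fail in the presence of trapped geodesics.
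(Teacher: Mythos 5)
Your proposal is correct and follows essentially the same route as the paper: both reduce the decomposition to the Dirichlet problem for $-\delta_a d_a$ on $H^1_0(M;\Lambda^1_M\times\C)$, prove triviality of $\ker d_a\cap H^1_0$ by the integrating-factor/transport argument along geodesics (using simplicity so that every geodesic exits), and obtain boundedness via Korn's inequality. The only cosmetic difference is that you dispatch surjectivity via G\aa rding plus Fredholm index zero, whereas the paper computes the cokernel explicitly; the substance is identical.
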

For the proof we need the following result.

\begin{Proposition}\label{solution operator for Dirichlet problem}
Introduce the operator $\Delta_{g,a}:=\delta_a d_a:H^1_0(M;\Lambda^1_M\times \C)\to H^{-1}(M;\Lambda^1_M\times \C)$. 
There is a bounded solution operator
\begin{equation}\label{stability for solution of elliptic bvp}
(-\Delta^D_{g,a})^{-1}:H^{-1}(M;\Lambda^1_M\times \C)\to H^1_0(M;\Lambda^1_M\times \C)
\end{equation}
such that $(-\Delta_{g,a})(-\Delta^D_{g,a})^{-1}=\id$.
\end{Proposition}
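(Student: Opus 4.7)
My plan is to recast $-\Delta_{g,a} = -\delta_a d_a$ as $d_a^* d_a$ (using the adjoint relation $d_a^* = -\delta_a$ recorded just above) and to solve the Dirichlet problem variationally via Lax--Milgram combined with the Fredholm alternative. The natural sesquilinear form on $H^1_0(M;\Lambda^1_M\times\C)$ is
\[
B([w,\phi],[u,\psi]) := (d_a[w,\phi],\, d_a[u,\psi])_{L^2(M;S^2_M\times\Lambda^1_M)},
\]
which is bounded on $H^1_0$ because $d_a$ is a first-order operator with smooth coefficients.

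The analytic heart of the argument is a G\aa rding inequality of the form $\|d_a[w,\phi]\|_{L^2}^2 + C_0\|[w,\phi]\|_{L^2}^2 \geq c\,\|[w,\phi]\|_{H^1}^2$. Expanding $d_a[w,\phi] = [d^sw+a\phi g,\,d\phi+aw]$ and absorbing the zero-order $a$-contributions via Cauchy--Schwarz with a small coefficient, it reduces to combining the trivial identity $\|d\phi\|_{L^2}^2 + \|\phi\|_{L^2}^2 = \|\phi\|_{H^1}^2$ for scalar functions with a Korn-type inequality $\|d^sw\|_{L^2}^2 + \|w\|_{L^2}^2 \geq c\|w\|_{H^1}^2$ for $1$-forms on the compact Riemannian manifold $(M,g)$ (compare Remark~\ref{rmk::Korn}). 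By Lax--Milgram, $-\Delta_{g,a} + \lambda : H^1_0 \to H^{-1}$ is then an isomorphism for $\lambda$ sufficiently large; since $\lambda\,\id$ factors through the compact Rellich embedding $H^1_0 \hookrightarrow L^2 \hookrightarrow H^{-1}$, the operator $-\Delta_{g,a} : H^1_0 \to H^{-1}$ is Fredholm of index $0$.

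To close via the Fredholm alternative I must check injectivity on $H^1_0$. If $-\Delta_{g,a}[w,\phi] = 0$ with $[w,\phi]\in H^1_0$, pairing with $[w,\phi]$ in $L^2$ gives $0 = \|d_a[w,\phi]\|_{L^2}^2$, whence $d_a[w,\phi]=0$. Here I would identify pairs with their induced functions on $SM$; a direct computation using $g_{ij}v^iv^j=1$ on $SM$ yields
\[
d_a[w,\phi](x,v) \;=\; X[w,\phi](x,v) \;+\; a(x)\,[w,\phi](x,v),
\]
where $X$ is the geodesic vector field. Thus $u := [w,\phi]$ satisfies the first-order transport equation $(X+a)u=0$ on $SM$, while the Dirichlet condition forces $u\equiv 0$ on $SM|_{\p M}$. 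Simplicity of $(M,g)$ ensures every geodesic reaches $\p M$ in finite time, so integrating this ODE backwards along each geodesic yields $u\equiv 0$ on $SM$, and therefore $w\equiv 0$, $\phi\equiv 0$ on $M$; elliptic regularity of the smooth solution justifies the pointwise manipulation.

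Injectivity together with Fredholm index zero gives bijectivity of $-\Delta_{g,a}^D : H^1_0 \to H^{-1}$, and the open mapping theorem produces the bounded inverse satisfying $(-\Delta_{g,a})(-\Delta_{g,a}^D)^{-1}=\id$. The step I expect to be the most delicate is the Korn-type inequality for $d^s$ on a Riemannian manifold with boundary (deserving its own remark in the paper); the variational bookkeeping and the transport reinterpretation of $d_a$ on $SM$ then assemble into the proof without further surprises.
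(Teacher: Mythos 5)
Your proof is correct, and it reaches the same two key ingredients as the paper: the transport-equation argument for injectivity (your identity $d_a[w,\phi]=(X+a)[w,\phi]$ on $SM$ is exactly the integrating-factor computation $X(U_a^{-1}[w,\phi])=U_a^{-1}d_a[w,\phi]$ used in the paper, and simplicity is used identically to integrate along geodesics from the boundary), and the Korn-plus-Poincar\'e estimate $\|[w,\phi]\|_{H^1}^2\le C\|d_a[w,\phi]\|_{L^2}^2$ for the boundedness of the inverse. Where you genuinely diverge is in how surjectivity is obtained. The paper appeals to the general theory of elliptic boundary value problems (principal symbol $|\xi|^2$, coercive Dirichlet condition) to reduce to showing that \emph{both} the kernel and the cokernel are trivial, and the cokernel step requires a separate integration-by-parts argument extracting boundary traces of a putative cokernel element $[u,\varphi]$. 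Your route --- G\aa rding inequality, Lax--Milgram for $-\Delta_{g,a}+\lambda$, and Rellich compactness to conclude Fredholm of index zero --- makes the cokernel computation unnecessary: injectivity alone forces bijectivity. This is more self-contained (it never invokes the black box of coercive elliptic BVP theory) and avoids what is arguably the least transparent step of the paper's proof; the price is that you must verify the G\aa rding inequality explicitly, but that is the same Korn-type estimate the paper needs anyway for the norm bound on $(-\Delta^D_{g,a})^{-1}$, so nothing extra is really spent. One small point to keep in mind: in the injectivity step, interior elliptic regularity gives smoothness of $[w,\phi]$ only in $M^{\rm int}$, so to invoke the Dirichlet condition pointwise you should either use boundary regularity for the Dirichlet problem or pass to the limit along geodesics approaching $\p M$; the paper is equally terse here.
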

\begin{proof}
One can see that $\sigma_p(-\Delta_{g,a})(x,\xi)=|\xi|^2$, so $-\Delta_{g,a}$ is a second order elliptic operator. Also, the Dirichlet boundary condition is coercive. Therefore, it is left to show that this elliptic problem has trivial kernel and cokernel.

We first prove the triviality of the kernel. Suppose $[w,\phi]\in H^1_0(M;\Lambda^1_M\times \C)$ with $-\Delta_{g,a}[w,\phi]=0$ in $M$. Then $[w,\phi]\in C^\infty(M;\Lambda^1_M\times \C)$ by ellipticity. One can also check that
$$
\|d_a[w,\phi]\|^2_{L^2(M;S^2_M\times \Lambda^1_M)}=(-\Delta_{g,a}[w,\phi],[w,\phi])_{L^2(M;\Lambda^1_M\times \C)}=0.
$$
Hence, we have $d_a[w,\phi]=0$ in $M$. For any $x_0\in M^{\rm int}$ and any $v_0\in S_{x_0}M$, there is a unique geodesic $\gamma_{x_0,v_0}$ such that $x_0=\gamma_{x_0,v_0}(0)$ and $v_0=\dot\gamma_{x_0,v_0}(0)$. Let us also write $x_1=\gamma_{x_0,v_0}(\tau(x,v_0))$ and $v_1=\dot\gamma_{x_0,v_0}(\tau(x,v_0))$. Then clearly $(x_1,v_1)\in\p_-SM$. Since
$$
X\big(U^{-1}_a(x,v)[w,\phi](x,v)\big)=U^{-1}_a(x,v)\,d_a[w,\phi](x,v)=0,\quad (x,v)\in SM,
$$
the expression $U^{-1}_a(x,v)[w,\phi](x,v)$ is constant along the geodesic $\gamma_{x_0,v_0}$. Therefore,
$$
U^{-1}_a(x_0,v_0)[w,\phi](x_0,v_0)=U^{-1}_a(x_1,v_1)[w,\phi](x_1,v_1).
$$
According to the hypothesis $[w,\phi]|_{\p M}=0$, this implies that $[w,\phi](x_0,v_0)$. Since $(x_0,v_0)\in SM^{\rm int}$ was arbitrary, we can conclude that $[w,\phi]=0$ in $M$.

To prove the triviality of the cokernel, consider $[u,\varphi]\in C^\infty(M;\Lambda^1_M\times\C)$ such that
$$
(-\Delta_{g,a}[w,\phi],[u,\varphi])_{L^2(M;\Lambda^1_M\times\C)}=0\quad\text{for all}\quad[w,\phi]\in C^\infty(M;\Lambda^1_M\times\C)\quad\text{with}\quad[w,\phi]|_{\p M}=0.
$$
Then for all $[w,\phi]\in C^\infty_0(M^{\rm int};\Lambda^1_M\times\C)$,
\begin{align*}
0&=(-\Delta_{g,a}[w,\phi],[u,\varphi])_{L^2(M;\Lambda^1_M\times\C)}=(d_a[w,\phi],d_a[u,\varphi])_{L^2(M;S^2_M\times \Lambda^1_M)}\\
&=([w,\phi],-\Delta_{g,a}[u,\varphi])_{L^2(M;\Lambda^1_M\times\C)}.
\end{align*}
This implies that $-\Delta_{g,a}[u,\varphi]=0$. For arbitrary $[v,\psi]\in C^\infty(M;\Lambda^1_M\times\C)$ with $[v,\psi]|_{\p M}=0$,
\begin{align*}
0&=([v,\psi],-\Delta_{g,a}[u,\varphi])_{L^2(M,\Lambda^1_M\times\C)}=(d_a[v,\psi],d_a[u,\varphi])_{L^2(M;S^2_M\times \Lambda^1_M)}\\
&=\int_{\p M}\<j_\nu d^s v+a\psi\nu,\overline u\>_g+(j_\nu d\psi+a j_\nu v)\overline\varphi\,d\sigma_{\p M}.
\end{align*}
In the last step we used the fact that $[u,\varphi]$ is in the cokernel. Then we get $[u,\varphi]|_{\p M}=0$, which allows us to conclude that $[u,\varphi]=0$ since $-\Delta_{g,a}[u,\varphi]=0$.

To prove the boundedness of $(-\Delta_{g,a}^D)^{-1}$, given $[w,\phi]\in H^1_0(M;\Lambda^1_M\times\C)$, using Korn's inequality \cite[Corollary~5.12.3]{taylor2011partial} in combination with Poincar\'e type inequality and integration by parts, we get
\begin{multline*}
\|[w,\phi]\|_{H^1(M;\Lambda^1_M\times\C)}^2\\
\le C\big(\|d_a[w,\phi]\|_{L^2(M;S^2_M\times \Lambda^1_M)}^2+\|[w,\phi]\|_{L^2(M;\Lambda^1_M\times\C)^2}^2\big)\le C\|d_a[w,\phi]\|_{L^2(M;S^2_M\times \Lambda^1_M)}^2\\
=C\<-\Delta_{g,a}[w,\phi],[w,\phi]\>\le C\|-\Delta_{g,a}[w,\phi]\|_{H^{-1}(M;\Lambda^1_M\times\C)}\|[w,\phi]\|_{H^1(M;\Lambda^1_M\times\C))},
\end{multline*}
where $\<\cdot,\cdot\>$ is the duality between $H_0^1(M;\Lambda^1_M\times\C)$ and $H^{-1}(M;\Lambda^1_M\times\C)$. This finishes the proof.
\end{proof}
\begin{Remark}\label{rmk::Korn}{\rm
To prove analogs of Proposition~\ref{decomposition of pairs} and Theorem~\ref{thm::main 2} for higher ranked tensors, one needs to derive a generalization of Korn's inequality. This can be achieved following the same reasonings as in \cite{duvaut1976inequalities,mclean2000strongly} and using \cite[Lemma~7.2]{dairbekov2011conformal}.
}\end{Remark}

Now we can prove Proposition~\ref{decomposition of pairs}.

\begin{proof}[Proof of Proposition~\ref{decomposition of pairs}]
Use Proposition~\ref{solution operator for Dirichlet problem} by setting $[u,\varphi]:=-\delta_a[f,\alpha]$, $[w,\phi]:=(-\Delta_{g,a}^D)^{-1}[u,\varphi]$ and $[h,\beta]:=[f,\alpha]-d_a[w,\phi]$. Then $\cP_a=-d_a(-\Delta_{g,a}^D)^{-1}\delta_a$ and $\cS_a=\id-\cP_a$ are bounded from $L^2(M;S^2_M\times \Lambda^1_M)\to L^2(M;S^2_M\times \Lambda^1_M)$.
\end{proof}

Later, we will also need the following result.

\begin{Proposition}\label{solvability of elliptic bvp}
Let $[v,\varphi]\in H^{-1}(M;\Lambda^1_M\times \C)$ and $[w_0,\phi_0]\in H^{1/2}(\p M;\Lambda^1_M\times \C)$. Then the boundary value problem
\begin{equation}\label{bvp for elliptic pde}
-\Delta_{g,a}[w,\phi] = [v,\varphi]\quad\text{in}\quad M,\qquad[w,\phi]|_{\p M}=[w_0,\phi_0]
\end{equation}
has a unique solution $[w,\phi]\in H^1(M;\Lambda^1_M\times\C)$, and there is a constant $C>0$ such that the following estimate holds
\begin{equation}\label{stability for solution of elliptic bvp}
\|[w,\phi]\|_{H^1(M;\Lambda^1_M\times \C)}\le C\big(\|[v,\varphi]\|_{H^{-1}(M;\Lambda^1_M\times\C)}+\|[w_0,\phi_0]\|_{H^{1/2}(\p M;\Lambda^1_M\times\C)}\big).
\end{equation}
\end{Proposition}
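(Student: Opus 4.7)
The plan is to reduce the inhomogeneous Dirichlet problem to the homogeneous one already solved in Proposition~\ref{solution operator for Dirichlet problem} by lifting the boundary data.

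First, I would invoke the standard fact that the trace map $H^1(M;\Lambda^1_M\times\C)\to H^{1/2}(\p M;\Lambda^1_M\times\C)$ admits a bounded right inverse $E$. Applying it to $[w_0,\phi_0]$ produces an extension $[W,\Phi]:=E[w_0,\phi_0]\in H^1(M;\Lambda^1_M\times\C)$ with $[W,\Phi]|_{\p M}=[w_0,\phi_0]$ and
\[
\|[W,\Phi]\|_{H^1(M;\Lambda^1_M\times\C)}\le C\|[w_0,\phi_0]\|_{H^{1/2}(\p M;\Lambda^1_M\times\C)}.
\]
Because $-\Delta_{g,a}=-\delta_a d_a$ is a second-order differential operator with smooth coefficients, it maps $H^1(M;\Lambda^1_M\times\C)$ boundedly into $H^{-1}(M;\Lambda^1_M\times\C)$, so $[\tilde v,\tilde \varphi]:=[v,\varphi]+\Delta_{g,a}[W,\Phi]\in H^{-1}(M;\Lambda^1_M\times\C)$ with a norm controlled by the data.

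Next, I would set $[\tilde w,\tilde\phi]:=(-\Delta^D_{g,a})^{-1}[\tilde v,\tilde\varphi]\in H^1_0(M;\Lambda^1_M\times\C)$, using the bounded solution operator furnished by Proposition~\ref{solution operator for Dirichlet problem}. Then $[w,\phi]:=[\tilde w,\tilde\phi]+[W,\Phi]$ lies in $H^1(M;\Lambda^1_M\times\C)$, solves $-\Delta_{g,a}[w,\phi]=[v,\varphi]$ in $M$, and satisfies $[w,\phi]|_{\p M}=[w_0,\phi_0]$. The stability estimate \eqref{stability for solution of elliptic bvp} follows by combining the bound for $(-\Delta^D_{g,a})^{-1}$ with the trace-extension bound and the mapping property of $\Delta_{g,a}$.

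For uniqueness, the difference of two $H^1$-solutions lies in $H^1_0(M;\Lambda^1_M\times\C)$ and is annihilated by $-\Delta_{g,a}$; the triviality of the kernel of $(-\Delta^D_{g,a})$ established inside the proof of Proposition~\ref{solution operator for Dirichlet problem} forces it to vanish. The only substantive points are the existence of the bounded trace lifting (standard for Lipschitz domains with smooth bundles) and the correct bookkeeping of which operator sits in which Sobolev scale; there is no real obstacle, as the argument is essentially formal once Proposition~\ref{solution operator for Dirichlet problem} is in hand.
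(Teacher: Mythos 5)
Your proposal is correct and follows essentially the same route as the paper: lift the boundary data by a bounded right inverse of the trace map, subtract to reduce to the homogeneous Dirichlet problem handled by Proposition~\ref{solution operator for Dirichlet problem}, and combine the resulting bounds. Your explicit uniqueness remark (via the trivial kernel of $-\Delta^D_{g,a}$) is a welcome addition that the paper leaves implicit.
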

\begin{proof}
Since $[w_0,\phi_0]\in H^{1/2}(\p M;\Lambda^1_M\times\C)$, there is $[\tilde w,\tilde \phi]\in H^1(M;\Lambda^1_M\times\C)$ such that $[\tilde w,\tilde\phi]|_{\p M}=[w_0,\phi_0]$ and
\begin{equation}\label{extension is bounded by trace}
\|[\tilde w,\tilde \phi]\|_{H^1(M;\Lambda^1_M\times\C)}\le C\|[w_0,\phi_0]\|_{H^{1/2}(\p M;\Lambda^1_M\times \C)}.
\end{equation}
Set $[w_1,\phi_1]:=[w,\phi]-[\tilde w,\tilde\phi]$. Then \eqref{bvp for elliptic pde} is equivalent to
$$
-\Delta_{g,a} [w_1,\phi_1]=[\tilde v,\tilde\varphi]\quad\text{in}\quad M,\qquad [\tilde v,\tilde\varphi]|_{\p M}=0,
$$
where $[\tilde v,\tilde\varphi]:=[v,\varphi]+\Delta_{g,a}[\tilde w,\tilde\phi]\in H^{-1}(M;\Lambda^1_M\times\C)$. By Proposition~\ref{solution operator for Dirichlet problem}, there is a unique $[w_1,\phi_1]\in H^1_0(M;\Lambda^1_M\times\C)$ solving $-\Delta_{g,a} [w_1,\phi_1]=[\tilde v,\tilde\varphi]$ and such that $\|[w_1,\phi_1]\|_{H^1(M;\Lambda^1_M\times\C)}\le C\|[\tilde v,\tilde\varphi]\|_{H^{-1}(M;\Lambda^1_M\times\C)}$. Using triangle inequalities, this implies
$$
\|[w,\phi]\|_{H^1(M;\Lambda^1_M\times\C)} \le C\|[v,\varphi]\|_{H^{-1}(M;\Lambda^1_M\times\C)}+C\|\Delta_{g,a}[\tilde w,\tilde\phi]\|_{H^{-1}(M;\Lambda^1_M\times\C)}+\|[\tilde w,\tilde\phi]\|_{H^1(M;\Lambda^1_M\times\C)}.
$$
By boundedness of $\Delta_{g,a}:H^1_0(M;\Lambda^1_M\times\C)\to H^{-1}(M;\Lambda^1_M\times\C)$ and \eqref{extension is bounded by trace}, we come to \eqref{stability for solution of elliptic bvp} as desired.
\end{proof}

\section{The adjoint and normal operators}\label{sctn::normal operator}

\subsection{Transport equations and $\bI_a$}
The transform $\bI_a[f,\alpha]$ can be realized as the trace on $\p_+ SM$ of the solution $u:SM\to \C$ to the following transport equation on $SM$,
$$
Xu+au=-[f,\alpha]\quad\text{in}\quad SM,\qquad u|_{\p_-SM}=0.
$$
This equation has a unique solution $u$, since on any fixed geodesic the transport equation is an ODE with zero initial condition and an integral expression gives us that $u|_{\partial_+ SM}$ matches $\bI_a[f,\alpha]$.

For $w\in C^\infty(\p_+SM,\C^n)$ given, let us denote $w_\psi (x,v) := w(\gamma_{x,v}(-\tau(x,-v)),\dot\gamma_{x,v}(-\tau(x,-v)))$ the unique solution $u$ to the transport problem
$$
Xu=0\quad\text{in}\quad SM,\qquad u\big|_{\p_+SM}=w.
$$
Define the integrating factor $U_{a}:SM\to\C$, unique solution to 
$$
(X+a)U_a=0\quad\text{in}\quad SM,\qquad U_{a}|_{\p_+SM}=1,
$$
whose integral expression is given by
$$
U_{a}(x,v)=\exp\bigg(-\int^0_{-\tau(x,-v)}a(\gamma_{x,v}(s))\,ds\bigg),\quad (x,v)\in SM.
$$
By solving explicitly the transport equation along the geodesic, one can show that
$$
U_{a}(\gamma_{x,v}(t),\dot\gamma_{x,v}(t))=\exp\bigg({-\int_0^t a(\gamma_{x,v}(s))\,ds}\bigg),\quad (x,v)\in SM,
$$
and hence the following integral formula holds
$$
\bI_{a} [f,\alpha](x,v)=\int_0^{\tau(x,v)}U_{a}^{-1}(\gamma_{x,v}(t),\dot\gamma_{x,v}(t))[f,\alpha]\big(\gamma_{x,v}(t),\dot\gamma_{x,v}(t)\big)\,dt,\quad (x,v)\in\p_+SM.
$$

\subsection{The adjoint of $\bI_a$}\label{sec:adjoints}

Denote by $L_\mu^2(\p_+ SM;\C)$ the completion of $C_c^\infty(\p_+ SM;\C)$ for the inner product
\begin{align*}
(w,w')_{L^2_\mu(\p_+SM;\C)}=\int_{\p_+ SM}w\overline{w'}\,d\mu, \qquad d\mu(x,v):=\< v,\nu_x\>_{g(x)} d\Sigma^{2n-2}(x,v),
\end{align*}
where $d\Sigma^{2n-2}$ be the volume form on $\p(SM)$. Using Santal\'o formula \cite[Lemma~A.8]{dairbekov2007boundary}, one can show that $\bI_a$ can be extended to a bounded operator $\bI_a:L^2(M;S^2_M\times \Lambda^1_M)\to L_\mu^2(\p_+ SM;\C)$.

Now, consider the adjoint $\bI_a^*:L^2_\mu(\p_+SM;\C)\to L^2(M;S^2_M\times \Lambda^1_M)$ of $\bI_{a}$. It was shown in \cite{assylbekov2017inversion} that
$$
(\bI_a[f,\alpha],w)_{L^2_\mu(\p_+SM,\C)}=\int_{SM}[f,\alpha](x,v)\,\overline{U_{-\overline a}(x,v)w_\psi(x,v)}\,d\Sigma^{2n-1}(x,v),
$$
for $w\in L^2_\mu(\p_+SM,\C)$. From this, one can get the following explicit expression for the adjoint of $\bI_a$
$$
\bI_{a}^*w=[(I_a^2)^*w,(I_a^1)^*w]=\Big[\int_{S_x M}v^{i} v^{j} U_{-\overline a}(x,v) w_\psi(x,v)\,d\sigma_x(v),\, \int_{S_x M}v^{i}U_{-\overline a}(x,v) w_\psi(x,v)\,d\sigma_x(v)\Big],
$$
where $d\sigma_x$ is the measure on $S_xM$.

\subsection{The normal operator}\label{section::normal operator} We embed $M$ into the interior of a compact manifolds $\widetilde M$ with boundary and extend the metric $g$ to $\widetilde M$ and keep the same notation for the extension, choosing $(\widetilde M,g)$ to be sufficiently close to $(M,g)$ so that it remains simple. We also extend the attenuation coefficient $a$ to $\widetilde M$ smoothly and keep the same notation for the extension.

We denote by $\tbI_a$ the attenuated geodesic ray transform on $\widetilde M$. The the \emph{\bfseries normal operator} is defined as $\tbN_{a}:=\tbI_a^*\tbI_a$. We say that $\tbN_{a}$ is \emph{\bfseries elliptic on $a$-solenoidal pairs}, if $\diag(d_a\Lambda \delta_a,\tbN_{a})$, acting on pairs, is elliptic (as a system of pseudodifferential operators of order $-1$), where $\Lambda$ is a proper pseudodifferential operator on $\widetilde M^{\rm int}$ with principal symbol $1/|\xi|^3$. Recall that $\diag(d_a\Lambda \delta_a,\tbN_{a})$ is an elliptic system if $\det\sigma_p(\diag(d_a\Lambda \delta_a,\tbN_{a}))(x,\xi)\neq 0$ for $(x,\xi)\in TM^{\rm int}\setminus \{0\}$; see \cite[page~46]{shubin2001pseudodifferential}.

\begin{Proposition}\label{prop::ellipticity of N}
$\tbN_{a}$ is a pseudodifferential operator of order $-1$ in $\widetilde M^{\rm int}$ which is elliptic on $a$-solenoidal pairs.
\end{Proposition}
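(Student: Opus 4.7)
The plan is to realize $\tbN_a=\tbI_a^*\tbI_a$ as a classical integral operator on $\tM^{\rm int}$, to identify it as a $\Psi$DO of order $-1$ by exhibiting its Schwartz kernel, to compute the principal symbol, and to verify the ellipticity condition on $a$-solenoidal pairs at the symbol level.

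Substituting the adjoint formula from Section~\ref{sec:adjoints} into $\tbI_a^*\tbI_a$ and applying Santal\'o's formula, the resulting double integral over $S_x\tM\times(0,\tau(x,v))$ is converted into an integral over $\tM$ by the change of variables $y=\exp_x(tv)$. With $v(x,y):=\exp_x^{-1}(y)/d_g(x,y)$ and $w(x,y)$ denoting, respectively, the unit tangent at $x$ and at $y$ of the minimizing geodesic between $x$ and $y$, the Jacobian contributes the factor $d_g(x,y)^{n-1}$ in the denominator and one obtains
\begin{equation*}
\tbN_a[f,\alpha](x)=\int_\tM \frac{\cA_a(x,y)\,[f,\alpha]\bigl(y,w(x,y)\bigr)}{d_g(x,y)^{n-1}}\,d\Vol_g(y),
\end{equation*}
where $\cA_a(x,y)$ is a smooth pair-valued function built from $v(x,y)\otimes v(x,y)$ and $v(x,y)$ together with the integrating factors $U_{-\overline a}(x,v(x,y))$ and $U_a(y,w(x,y))^{-1}$. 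Since $d_g(x,y)^{-(n-1)}$ is a standard conormal singularity at the diagonal whose Fourier transform in $y-x$ has symbol of order $-1$, $\tbN_a$ is a classical $\Psi$DO of order $-1$ on $\tM^{\rm int}$.

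For the principal symbol, I freeze the metric at $x$ in Riemannian normal coordinates ($g_{ij}(x)=\delta_{ij}$) and Fourier-transform the leading singularity. The factors $U_{-\overline a},U_a^{-1}$ are smooth and equal $1$ on the diagonal, so they contribute only to the subprincipal terms; hence $\sigma_p(\tbN_a)=\sigma_p(\tbN_0)$, and the standard computation (compare \cite{sharafutdinov1994integral,stefanov2008boundary}) gives
\begin{equation*}
\sigma_p(\tbN_a)(x,\xi)[f,\alpha]=\frac{c_n}{|\xi|}\int_{\{v\in S_x\tM\,:\,\<v,\xi\>_g=0\}}\bigl[v\otimes v,\;v\bigr]\bigl(f(v,v)+\alpha(v)\bigr)\,d\sigma(v)
\end{equation*}
for a positive dimensional constant $c_n$. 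Because $a\phi g$ and $aw$ are zeroth-order, one also has $\sigma_p(d_a)(x,\xi)[w,\phi]=i[\xi\otimes_s w,\,\phi\,\xi]$ and $\sigma_p(\delta_a)=\sigma_p(\delta_0)$.

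For the ellipticity, suppose $[f,\alpha]\in\ker\sigma_p(\tbN_a)(x,\xi)$. The vanishing of both components of the principal symbol display, combined with the symmetry $v\mapsto-v$ of the integration region $\xi^\perp\cap S^{n-1}$, decouples the even (in $v$) $f$-contribution from the odd $\alpha$-contribution and yields $\int v^iv^jf(v,v)\,d\sigma(v)=0$ for all $i,j$ and $\int v^i\alpha(v)\,d\sigma(v)=0$ for all $i$ on $\xi^\perp\cap S^{n-1}$. A standard moment computation on the sphere in $\xi^\perp$ (as in~\cite{sharafutdinov1994integral}) forces $f|_{\xi^\perp\otimes\xi^\perp}=0$ and $\alpha|_{\xi^\perp}=0$; equivalently $f=\xi\otimes_s w$ for some 1-form $w$ and $\alpha=\phi\,\xi$ for some scalar $\phi$, i.e.\ $[f,\alpha]\in\mathrm{range}\,\sigma_p(d_a)(x,\xi)$. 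Conversely any such pair visibly lies in $\ker\sigma_p(\tbN_a)(x,\xi)$. Since $\sigma_p(d_a\Lambda\delta_a)$ vanishes exactly on $a$-solenoidal symbols and is injective on $\mathrm{range}\,\sigma_p(d_a)$, these two kernels intersect only in $\{0\}$, whence $\det\sigma_p(\diag(d_a\Lambda\delta_a,\tbN_a))(x,\xi)\ne 0$ for every $(x,\xi)\in T\tM^{\rm int}\setminus 0$, which is the required ellipticity. The main obstacle will be the last algebraic step: the simultaneous presence of the 2-tensor $f$ and the 1-form $\alpha$ requires the parity decoupling $v\mapsto-v$ to separate the quadratic and linear vanishing statements on $\xi^\perp$, after which each reduces to a well-known tensor-tomography lemma.
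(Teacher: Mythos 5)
Your overall route is the same as the paper's: write the Schwartz kernel of $\tbN_a$ explicitly, read off from the $d_g(x,y)^{-(n-1)}$ conormal singularity that it is a classical pseudodifferential operator of order $-1$, compute the principal symbol as an integral over $\xi^\perp\cap S_x\widetilde M$, and check injectivity of $\sigma_p(\diag(d_a\Lambda\delta_a,\tbN_a))$. However, one step is wrong: the attenuation does \emph{not} drop out of the principal symbol. The integrating factors in the kernel are normalized at the entry point of the geodesic, not at the base point $x$, so on the diagonal the product of the weight coming from the adjoint and the weight coming from the forward transform is
$\widetilde U_{-\overline a}(x,v)\,\widetilde U_a^{-1}(x,v)=\widetilde U_{-2\Re(a)}(x,v)=\exp\big(\int_{-\tau(x,-v)}^0 2\Re a(\gamma_{x,v}(s))\,ds\big)$,
which is positive but is not identically $1$ unless $\Re a\equiv 0$ (and no renormalization of $U_a$ makes both factors equal to $1$ simultaneously). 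The correct principal symbol is therefore
$$
\sigma_p(\tbN_a)(x,\xi)[f,\alpha]=2\pi\int_{S_x\widetilde M}\big[\omega\otimes\omega,\,\omega\big]\big(f(\omega,\omega)+\alpha(\omega)\big)\,\delta(\<\omega,\xi\>_{g(x)})\,\widetilde U_{-2\Re(a)}(x,\omega)\,d\sigma_x(\omega),
$$
so $\sigma_p(\tbN_a)\neq\sigma_p(\tbN_0)$ in general.

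This matters for your ellipticity argument, because the weight $\widetilde U_{-2\Re(a)}(x,\omega)$ is not even in $\omega$: the parity decoupling you perform \emph{under the integral sign}, which is what produces the two separate moment conditions $\int \omega^i\omega^j f(\omega,\omega)\,d\sigma=0$ and $\int\omega^i\alpha(\omega)\,d\sigma=0$, is no longer valid once the weight is present. The repair is exactly what the paper does: pair $\sigma_p(\tbN_a)(x,\xi)[f,\alpha]=0$ with $[f,\alpha]$ itself to obtain $\int|f(\omega,\omega)+\alpha(\omega)|^2\,\delta(\<\omega,\xi\>_{g(x)})\,\widetilde U_{-2\Re(a)}(x,\omega)\,d\sigma_x(\omega)=0$; positivity of the weight then forces $f(\omega,\omega)+\alpha(\omega)=0$ \emph{pointwise} on $\xi^\perp\cap S_x\widetilde M$, and only at that stage does the substitution $\omega\mapsto-\omega$ separate $f(\omega,\omega)=0$ from $\alpha(\omega)=0$. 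Combined with $f_{ij}\xi^j=0$ and $\alpha_j\xi^j=0$, which follow from $\sigma_p(d_a\Lambda\delta_a)[f,\alpha]=0$ as you note, this yields $f=0$ and $\alpha=0$. With that correction the rest of your argument, including the identification of $\ker\sigma_p(\tbN_a)$ with the range of $\sigma_p(d_a)$, goes through.
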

\begin{proof}
First, we prove that $\tbN_a$ is a pseudodifferential operator of order $-1$ in $M^{\rm int}$. Recall that $\tbN_a:L^2(M;S^2_M\times \Lambda^1_M)\to L^2(M;S^2_M\times \Lambda^1_M)$. Therefore, we introduce the following notation
$$
\tbN_{a}[f,\alpha]=[\,\tbN_{a}^{22}f+\tbN_{a}^{21}\alpha\,,\,\tbN_{a}^{12}f+\tbN_{a}^{11}\alpha\,],\quad [f,\alpha]\in L^2(M;S^2_M\times \Lambda^1_M),
$$
where
$$
\tbN_{a}^{22}:=(I_{a}^2)^*I_{a}^{2},\quad \tbN_{a}^{21}:=(I_{a}^2)^*I_{a}^{1},\quad \tbN_{a}^{12}:=(I_{a}^1)^*I_{a}^{2},\quad \tbN_{a}^{11}:=(I_{a}^1)^*I_{a}^{1}.
$$
Then one can show that
\begin{align*}
(\tbN_{a}^{22}f){}^{i'j'}(x)&=\int_{S_x \widetilde M}v^{i'} v^{j'} \widetilde U_{-\overline a}(x,v)\int_{-\tilde\tau(x,-v)}^{\tilde\tau(x,v)}\widetilde U_{a}^{-1}(\gamma_{x,v}(t),\dot\gamma_{x,v}(t))f_{ij}(\gamma_{x,v}(t))\dot\gamma_{x,v}^i(t)\dot\gamma_{x,v}^j(t)\,dt\,d\sigma_x(v),\\
(\tbN_{a}^{21}\alpha){}^{i'j'}(x)&=\int_{S_x \widetilde M}v^{i'}v^{j'} \widetilde U_{-\overline a}(x,v)\int_{-\tilde\tau(x,-v)}^{\tilde\tau(x,v)}\widetilde U_{a}^{-1}(\gamma_{x,v}(t),\dot\gamma_{x,v}(t))\alpha_{i}(\gamma_{x,v}(t))\dot\gamma_{x,v}^i(t)\,dt\,d\sigma_x(v),\\
(\tbN_{a}^{12}f){}^{i'}(x)&=\int_{S_x \widetilde M}v^{i'} \widetilde U_{-\overline a}(x,v)\int_{-\tilde\tau(x,-v)}^{\tilde\tau(x,v)}\widetilde U_{a}^{-1}(\gamma_{x,v}(t),\dot\gamma_{x,v}(t))f_{ij}(\gamma_{x,v}(t))\dot\gamma_{x,v}^i(t)\dot\gamma_{x,v}^j(t)\,dt\,d\sigma_x(v),\\
(\tbN_{a}^{11}\alpha){}^{i'}(x)&=\int_{S_x \widetilde M}v^{i'} \widetilde U_{-\overline a}(x,v)\int_{-\tilde\tau(x,-v)}^{\tilde\tau(x,v)}\widetilde U_{a}^{-1}(\gamma_{x,v}(t),\dot\gamma_{x,v}(t))\alpha_{i}(\gamma_{x,v}(t))\dot\gamma_{x,v}^i(t)\,dt\,d\sigma_x(v).
\end{align*}
Following \cite{frigyik2008x,holman2010doppler}, we use \cite[Lemma~B.1]{dairbekov2007boundary} to deduce that $\tbN_{a}$ is a pseudodifferential operator of order $-1$, and the principal symbols of the above operators are as follows:
\begin{align*}
\sigma_p(\tbN_{a}^{22})^{i'j'ij}(x,\xi)&=2\pi\int_{S_x \widetilde M}\omega^{i'}\omega^{j'}\omega^{i}\omega^{j}\delta(\<\omega,\xi\>_{g(x)})\widetilde U_{-2\Re(a)}(x,\omega)\,d\sigma_x(\omega),\\
\sigma_p(\tbN_{a}^{21})^{i'j'i}(x,\xi)&=2\pi\int_{S_x \widetilde M}\omega^{i'}\omega^{j'}\omega^{i}\delta(\<\omega,\xi\>_{g(x)})\widetilde U_{-2\Re(a)}(x,\omega)\,d\sigma_x(\omega),\\
\sigma_p(\tbN_{a}^{12})^{i'ij}(x,\xi)&=2\pi\int_{S_x \widetilde M}\omega^{i'}\omega^{i}\omega^{j}\delta(\<\omega,\xi\>_{g(x)})\widetilde U_{-2\Re(a)}(x,\omega)\,d\sigma_x(\omega),\\
\sigma_p(\tbN_{a}^{11})^{i'i}(x,\xi)&=2\pi\int_{S_x \widetilde M}\omega^{i'}\omega^{i}\delta(\<\omega,\xi\>_{g(x)})\widetilde U_{-2\Re(a)}(x,\omega)\,d\sigma_x(\omega).
\end{align*}
Now, we prove ellipticity. For this, note that the ellipticity of $\diag(d_a\Lambda \delta_a,\tbN_{a})$ is equivalent to saying that the principal symbol $\sigma_p(\diag(d_a\Lambda \delta_a,\tbN_{a}))(x,\xi)$, acting on pairs, is injective for every $(x,\xi)\in T\widetilde M^{\rm int}\setminus \{0\}$; see the comments preceding \cite[Definition~7.1]{treves1980introduction}. Assume that, for a constant symmetric $2$-tensor $f$ and a $1$-form $\alpha$, $\sigma_p(\tbN_{a})[f,\alpha]=0$ and $\sigma_p(d_a\Lambda \delta_a)[f,\alpha]=0$ at some $(x,\xi)\in T\widetilde M^{\rm int}\setminus \{0\}$. Then it follows that
\begin{equation}\label{f and h at xi are zero}
f_{ij}\xi^{i}=0,\quad \alpha_{j}\xi^{j}=0
\end{equation}
and
$$
0=\<\sigma_p(\tbN_{a})[f,\alpha],[f,\alpha]\>_{g(x)}=2\pi\int_{S_x\widetilde M}|f_{ij}\omega^i \omega^j+\alpha_{j}\omega^j|^2\delta(\<\omega,\xi\>_{g(x)})\widetilde U_{-2\Re(a)}(x,\omega)\,d\sigma_x(\omega),
$$
where the inner product $\<\cdot,\cdot\>_g$ is for pairs. Note that $\widetilde U_{-2\Re(a)}>0$ and that the set $S_{x,\xi}:=\{\omega\in S_x\widetilde M:\<\omega,\xi\>_{g(x)}=0\}$ is non-empty. Therefore, for all such $\omega$, we get $f_{ij}\omega^i \omega^j+\alpha_{i}\omega^i=0$. Since $-\omega$ is also in $S_{x,\xi}$, we also have $f_{ij}\omega^i \omega^j-\alpha_{i}\omega^i=0$. These two equalities imply that $f_{ij}\omega^i \omega^j=0$ and $\alpha_{i}\omega^i=0$ for all $\omega\in S_{x,\xi}$. Combining these with \eqref{f and h at xi are zero}, we conclude that $f=0$ and $\alpha=0$. Thus, $\tbN_{a}$ is elliptic on $a$-solenoidal pairs.
\end{proof}

Let $\mathcal E_{\widetilde M}:L^2(M;S^2_M\times \Lambda^1_M)\to L^2(\widetilde M;S^2_{\widetilde M}\times \Lambda^1_{\widetilde M})$ be the operator which extends all pairs in $L^2(M;S^2_M\times \Lambda^1_M)$ to $\widetilde M\setminus M$ by zero. In this way, we consider $L^2(M;S^2_M\times \Lambda^1_M)$ as a subspace of $L^2(\widetilde M;S^2_{\widetilde M}\times \Lambda^1_{\widetilde M})$. As it was pointed out in \cite[Section~2]{zhou2017generic}, for $[f,\alpha]\in L^2(M;S^2_M\times \Lambda^1_M)$, the knowledge of $\bI_a[f,\alpha]$ and $\tbI_a[f,\alpha]:=\tbI_a \mathcal E_{\widetilde M} [f,\alpha]$ is equivalent. We can also consider $\tbN_a:=\tbI_a^*\tbI_a$ as the bounded operator $\tbN_a:L^2(M;S^2_M\times \Lambda^1_M)\to L^2(\widetilde M;S^2_{\widetilde M}\times \Lambda^1_{\widetilde M})$.

\section{Generic stability}\label{sctn::stability result}

In this section we prove Theorem~\ref{thm::main 2}. Since $\diag(d_a\Lambda\delta_a, \tbN_{a})$, acting on pairs, is elliptic, there are pseudodifferential operators $Q$ and $Q'$ in $\widetilde M^{\rm int}$ of order $1$ such that for all $[f,\alpha]\in L^2(\widetilde M;S^2_{\widetilde M}\times\Lambda^1_{\widetilde M})\cap \cE'(\widetilde M^{\rm int};S^2_{\widetilde M}\times \Lambda^1_{\widetilde M})$,
\begin{equation}\label{QN+Q'C=1}
Q\tbN_{a}[f,\alpha]+Q'd_a\Lambda\delta_a[f,\alpha]=[f,\alpha]+K_1[f,\alpha]\quad\text{in}\quad \widetilde M^{\rm int},
\end{equation}
where $K_1$ is a smoothing operator acting on pairs in $\cE'(\widetilde M^{\rm int};S^2_{\widetilde M}\times \Lambda^1_{\widetilde M})$. Note that the kernel of $K_1$ may have singularities at $\p \widetilde M$.

Consider a compact smooth manifold $M_1$ such that $M\Subset M_1^{1/2}\Subset\widetilde M^{\rm int}$. We take $M_1$ sufficiently close to $M$ so that $(M_1,g)$ is simple. Consider a pair $[f,\alpha]\in L^2(M;S^2_M\times\Lambda^1_M)$ for whose extensions $\cE_{M_1}[f,\alpha]$ and $\cE_{\widetilde M}[f,\alpha]$ we have unique decompositions $
\cE_{B}[f,\alpha]=\cS_{a}\cE_{B}[f,\alpha]+d_a[w_{B},\phi_{B}]$ with $[w_{B},\phi_{B}]\in H^1_0(B;S^2_{B}\times \Lambda^1_{B})$ and $\delta_a\cS_{a}\cE_{B}[f,\alpha]=0$, where $B=M_1,\widetilde M$. Then $[w_B,\phi_B]=-(-\Delta_{g,a})_B^{-1}\delta_a(\cE_B[f,\alpha])$ and
$$
\cS_{a}\cE_{B}[f,\alpha]=\cE_{B}[f,\alpha]+d_a(-\Delta_{g,a})_B^{-1}\delta_a(\cE_B[f,\alpha]),\qquad B=M_1,\widetilde M,
$$
where $(-\Delta^D_{g,a})^{-1}_{B}:H^{-1}(B;\Lambda^1_{B}\times\C)\to H^1_0(B,\Lambda^1_{B}\times\C)$ are  bounded right inverses of the operators $-\Delta_{g,a}: H^1_0(B;\Lambda^1_{B}\times\C)\to H^{-1}(B;\Lambda^1_{B}\times\C)$ constructed in Proposition~\ref{solution operator for Dirichlet problem}.

We take a cut-off function $\chi\in C^\infty_0(\widetilde M^{\rm int})$ such that $\chi\equiv 1$ near $M_1$. Since $\supp\delta_a(\chi\cS_a\cE_{\widetilde M}[f,\alpha])\subset \widetilde M\setminus M_1$, by pseudolocal property, $Q'd_a\Lambda\delta_a(\chi\cS_a\cE_{\widetilde M}[f,\alpha])$ is smooth near $M_1$. Hence, by \eqref{QN+Q'C=1} we get
$$
Q\tbN_a\chi\cS_a\cE_{\tM}[f,\alpha]=\cS_a\cE_{\tM}[f,\alpha]+K_2[f,\alpha]\quad\text{in}\quad M_1^{\rm int}
$$
where $K_2:=(K_1-Q'd_a\Lambda\delta_a)\chi(\id+d_a(-\Delta_{g,a})_{\tM}^{-1}\delta_a)\cE_{\tM}$ whose kernel is in $C^\infty(M_1\times M_1^{\rm int})$. Since $\chi\equiv 1$ near $M_1$ and $\supp[f,\alpha]\subseteq M$, we have
$$
Q\tbN_a\chi\cS_a\cE_{\tM}[f,\alpha]=Q\tbN_a[f,\alpha]-Q\tbN_a\chi d_a[w_{\tM},\phi_{\tM}]\quad\text{in}\quad \tM^{\rm int}.
$$
Let $D$ be a parametrix for $-\Delta_{g,a}$ in $\tM^{\rm int}$. Then, $D(-\Delta_{g,a})=\id+K_3$ in $\tM^{\rm int}$ with $K_3$ being a smoothing operator in $\tM^{\rm int}$. Hence, it is not difficult to see that
$$
((-\Delta_{g,a})_{\tM}^{-1}-D)\delta_a(\cE_{\tM}[f,\alpha])=-K_4[f,\alpha]\quad\text{in}\quad\tM^{\rm int},
$$
where $K_4:=K_3(-\Delta_{g,a})_{\tM}^{-1}\delta_a\circ\cE_{\tM}$ which is also smoothing in $\tM^{\rm int}$. Then,
$$
[w_{\tM},\phi_{\tM}]=-D\delta_a(\cE_{\tM}[f,\alpha])+K_4[f,\alpha],
$$
and hence
\begin{align*}
Q\tbN_a\chi d_a[w_{\tM},\phi_{\tM}]&=Q\tbN_a\chi d_a(-D\delta_a(\cE_{\tM}[f,\alpha])+K_4[f,\alpha])\\
&=Q\tbN_a d_a(\chi(-D\delta_a(\cE_{\tM}[f,\alpha])+K_4[f,\alpha]))-Q\tbN_a \sigma_{d\chi}(-D\delta_a(\cE_{\tM}[f,\alpha])+K_4[f,\alpha])\\
&=-Q\tbN_a \sigma_{d\chi}(-D\delta_a(\cE_{\tM}[f,\alpha])+K_4[f,\alpha])\quad\text{in}\quad M_1^{\rm int},
\end{align*}
where $\sigma_{d\chi}$ is the symmetrized tensor product by $d\chi$. Observe that $K_5:=-Q\tbN_a \sigma_{d\chi}(-D\delta_a\circ \cE_{\tM}+K_4)$ is a pseudodifferential operator of order $-1$ in $M_1^{\rm int}$, and hence so is $K_{-1}:=K_5+K_2$. Therefore, we come to
\begin{equation}\label{QN=Id+K_{-1}}
Q\tbN_a[f,\alpha]=\cS_a\cE_{\tM}[f,\alpha]+K_{-1}[f,\alpha]\quad\text{in}\quad M_1^{\rm int}.
\end{equation}
Next, we wish and shall replace $\cS_a\cE_{\tM}[f,\alpha]$ here by $\cS_a\cE_{M_1}[f,\alpha]$. For this, observe that $\cS_a\cE_{M_1}[f,\alpha]=\cS_a\cE_{\tM}[f,\alpha]+d_a[w,\phi]$, where $[w,\phi]:=[w_{\tM},\phi_{\tM}]-[w_{M_1},\phi_{M_1}]\in H^1(M_1;\Lambda^1_{M_1}\times\C)$ satisfying
$$
-\Delta_{g,a}[w,\phi]=0\quad\text{in}\quad M_1^{\rm int},\qquad [w,\phi]|_{\p M_1}=[w_{\tM},\phi_{\tM}]|_{\p M_1}.
$$
Since $\supp(\cE_{\tM}[f,\alpha])\subseteq M$, we have $-\Delta_{g,a}[w_{\tM},\phi_{\tM}]=0$ in $\tM^{\rm int}\setminus M$. Then elliptic regularity guarantees that $[w_{\tM},\phi_{\tM}]$ is smooth in $\tM^{\rm int}\setminus M_1$, and hence $[w_{\tM},\phi_{\tM}]|_{\p M_1}$ is smooth on $\p M_1$. Hence, $K_7[f,\alpha]:=-d_a[w,\phi]_{M_1}$ is a linear operator from $L^2(M;S^2_M\times\Lambda^1_M)$ to $C^\infty(M_1;S^2_{M_1}\times\Lambda^1_{M_1})$, i.e. smoothing on $M_1$. Therefore, we can rewrite \eqref{QN=Id+K_{-1}} as
\begin{equation}\label{QN=Id+K}
Q\tbN_a[f,\alpha]=\mathcal S_a \mathcal E_{M_1}[f,\alpha]+K[f,\alpha]\quad\text{in}\quad M_1^{\rm int},
\end{equation}
where $K:=K_{-1}+K_7$ which is a pseudodifferential operator of order $-1$ in $M_1^{\rm int}$.

Now, we are ready to prove Theorem~\ref{thm::main 2}. To that end we need the following apriori estimate.

\begin{Proposition}\label{prop::apriori estimate for N}
For every $[f,\alpha]\in L^2(M;S^2_M\times \Lambda^1_M)$,
$$
\|\cS_a[f,\alpha]\|_{L^2(M;S^2_M\times \Lambda^1_M)}\le C\big(\|\tbN_a[f,\alpha]\|_{H^1(\widetilde M;S^2_{\widetilde M}\times \Lambda^1_{\widetilde M})}+\|[f,\alpha]\|_{H^{-1}(\tM;S^2_{\tM}\times \Lambda^1_{\tM})}\big).
$$
\end{Proposition}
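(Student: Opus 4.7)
The strategy is to deduce the estimate from the parametrix identity \eqref{QN=Id+K} derived in the discussion just preceding the proposition, namely
\[
Q\tbN_a[f,\alpha] = \cS_a\cE_{M_1}[f,\alpha] + K[f,\alpha] \quad\text{in } M_1^{\rm int},
\]
in which $Q$ is a proper pseudodifferential operator of order $1$ and $K$ is a pseudodifferential operator of order $-1$ on $M_1^{\rm int}$. The plan is to take $L^2(M)$ norms of both sides, exploit the mapping properties of $Q$ and $K$ to bound the right-hand side by the terms appearing in the claim, and then convert the resulting estimate on $\cS_a\cE_{M_1}[f,\alpha]$ into one on $\cS_a[f,\alpha]$.

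First, since $Q$ is a proper PDO of order $1$ on $M_1^{\rm int}$ and $M\Subset M_1^{\rm int}\Subset \tM^{\rm int}$, inserting a smooth cutoff supported in $M_1^{\rm int}$ yields $\|Q\tbN_a[f,\alpha]\|_{L^2(M)} \le C\|\tbN_a[f,\alpha]\|_{H^1(\tM)}$. Similarly, because $K$ is a PDO of order $-1$ acting on a pair compactly supported in $M_1^{\rm int}$, we obtain $\|K[f,\alpha]\|_{L^2(M)} \le C\|[f,\alpha]\|_{H^{-1}(\tM)}$. Combining these two bounds with the restriction of \eqref{QN=Id+K} to $M$ gives
\[
\|\cS_a\cE_{M_1}[f,\alpha]\big|_M\|_{L^2(M)} \le C\|\tbN_a[f,\alpha]\|_{H^1(\tM)} + C\|[f,\alpha]\|_{H^{-1}(\tM)}.
\]

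The remaining step is to replace $\cS_a\cE_{M_1}[f,\alpha]\big|_M$ by $\cS_a[f,\alpha]$. Their difference is $d_a[u^*,\eta^*]$ with $[u^*,\eta^*]:=[w_M,\phi_M]-[w_{M_1},\phi_{M_1}]\big|_M\in H^1(M)$, which solves the homogeneous problem $-\Delta_{g,a}[u^*,\eta^*]=0$ in $M^{\rm int}$ with Dirichlet data $[u^*,\eta^*]\big|_{\partial M}=-[w_{M_1},\phi_{M_1}]\big|_{\partial M}$. Because the source $\delta_a\cE_{M_1}[f,\alpha]$ vanishes throughout the collar $M_1\setminus \overline M$, interior elliptic regularity shows that $[w_{M_1},\phi_{M_1}]$ is smooth there, so that the interior trace $[w_{M_1},\phi_{M_1}]\big|_{\partial M}$ admits an $H^{1/2}(\partial M)$ bound in terms of a weaker Sobolev norm of $[f,\alpha]$; applying Proposition~\ref{solvability of elliptic bvp} then controls $\|[u^*,\eta^*]\|_{H^1(M)}$, and hence $\|d_a[u^*,\eta^*]\|_{L^2(M)}$, by the same right-hand side, concluding the proof.

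The main obstacle I anticipate is this final trace estimate: the bound on $[w_{M_1},\phi_{M_1}]\big|_{\partial M}$ must be in terms of $\|[f,\alpha]\|_{H^{-1}(\tM)}$ rather than merely $\|[f,\alpha]\|_{L^2(M)}$. To achieve this one must combine the gained regularity of $[w_{M_1},\phi_{M_1}]$ in the collar $M_1\setminus \overline M$ with the freedom of choosing $M_1$ arbitrarily close to $M$, so that the boundary correction can indeed be absorbed into the desired right-hand side.
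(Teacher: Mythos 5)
Your first half is fine and coincides with the paper: the parametrix identity \eqref{QN=Id+K} together with the mapping properties of $Q$ (order $1$) and $K$ (order $-1$) gives $\|\cS_a\cE_{M_1}[f,\alpha]\|_{L^2(M_1)}\le C(\|\tbN_a[f,\alpha]\|_{H^1}+\|[f,\alpha]\|_{H^{-1}})$, and you correctly identify the remaining discrepancy $\cS_a\cE_{M_1}[f,\alpha]-\cS_a[f,\alpha]$ on $M$ as $d_a$ of a pair solving the homogeneous equation $-\Delta_{g,a}[u^*,\eta^*]=0$ in $M$ with Dirichlet data $-[w_{M_1},\phi_{M_1}]|_{\p M}$. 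The gap is exactly the step you flag as your ``main obstacle,'' and your proposed fix does not close it. Interior elliptic regularity in the open collar $M_1^{\rm int}\setminus\overline M$ cannot control the trace on $\p M$: the source $\delta_a\cE_{M_1}[f,\alpha]$ is a distribution supported in $\overline M$ with a singular layer on $\p M$ (coming from the jump of the zero extension), so $\p M$ lies on the boundary of the region where the equation is homogeneous and the interior constants blow up as you approach it; shrinking $M_1$ toward $M$ only makes this worse. Moreover, even on compacta strictly inside the collar, the bound you get from $\|[w_{M_1},\phi_{M_1}]\|_{H^1(M_1)}\le C\|\delta_a\cE_{M_1}[f,\alpha]\|_{H^{-1}(M_1)}$ is $C\|[f,\alpha]\|_{L^2(M)}$, not $C\|[f,\alpha]\|_{H^{-1}(\tM)}$; and an $L^2$ remainder would make the proposition vacuous for its intended use in Theorem~\ref{thm::main 2}(a), where the remainder must be compact relative to $\|\cS_a[f,\alpha]\|_{L^2}$.

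The paper's resolution is of a different nature and never bounds the boundary correction by any norm of $[f,\alpha]$. In the collar one has the \emph{identity} $d_a[w,\phi]=-\cS_a\cE_{M_1}[f,\alpha]$ (because $\cE_{M_1}\cS_a[f,\alpha]$ vanishes there), so $d_a[w,\phi]$ is literally equal to the quantity already controlled by the parametrix identity. Integrating the attenuated transport equation $X\big(\widetilde U_a^{-1}[w,\phi]\big)=\widetilde U_a^{-1}d_a[w,\phi]$ along geodesics running from $\p M_1$ (where $[w,\phi]=0$) to $\p M$ gives an explicit integral formula for $[w,\phi]$ in $M_1\setminus M$, whence
$$
\|[w,\phi]\|_{H^1(M_1\setminus M;\Lambda^1_{M_1}\times\C)}\le C\|\cS_a\cE_{M_1}[f,\alpha]\|_{L^2(M_1\setminus M;S^2_{M_1}\times\Lambda^1_{M_1})}
$$
after an application of Korn's inequality. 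The trace theorem then controls $\|[w,\phi]\|_{H^{1/2}(\p M)}$, and Proposition~\ref{solvability of elliptic bvp} controls $\|[w,\phi]\|_{H^1(M)}$, by $\|\cS_a\cE_{M_1}[f,\alpha]\|_{L^2(M_1\setminus M)}$, which your first step already bounds by $\|\tbN_a[f,\alpha]\|_{H^1}+\|[f,\alpha]\|_{H^{-1}}$. This transport-equation argument is the missing idea; without it (or a substitute of comparable strength) your proof does not go through.
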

\begin{proof}
Starting from \eqref{QN=Id+K}, our first goal is to construct $\mathcal S_a[f,\alpha]$ from $\mathcal S_a \mathcal E_{M_1}[f,\alpha]$. We can write
\begin{equation}\label{solenoidal part of extension of f}
\mathcal S_a \mathcal E_{M_1}[f,\alpha]=\mathcal E_{M_1}\mathcal S_a [f,\alpha]-d_a[w,\phi]\quad\text{in}\quad M_1,
\end{equation}
where $[w,\phi]\in H^1(M_1,\Lambda^1_{M_1}\times\C)$ solves
$$
\delta_a d_a[w,\phi]=\delta_a\mathcal E_{\widetilde M}\mathcal S_a [f,\alpha]\quad\text{in}\quad \widetilde M,\quad [w,\phi]|_{\p M_1}=0.
$$
If we recover $[w,\phi]|_{\p M}\in H^{1/2}(\p M;\Lambda^1_M\times\C)$, then we could recover $[w,\phi]$ in $M$ by solving $\delta_a d_a[w,\phi]=0$ in $M$. Hence, we would recover $\mathcal S_a [f,\alpha]$ in $M$ via \eqref{solenoidal part of extension of f}. Therefore our aim is to recover $[w,\phi]|_{\p M}$.

From \eqref{solenoidal part of extension of f} we get
$$
\mathcal S_a \mathcal E_{M_1}[f,\alpha]=-d_a[w,\phi]\quad\text{in}\quad M_1\setminus M.
$$
Thus, we know $d_a[w,\phi]$ in $\widetilde M\setminus M$. Integrating
$$
\frac{d}{dt}\Big(\widetilde U^{-1}_a(\gamma(t),\dot\gamma(t))[w,\phi](\gamma(t),\dot\gamma(t))\Big)=\widetilde U^{-1}_a(\gamma(t),\dot\gamma(t))d_a[w,\phi](\gamma(t),\dot\gamma(t))
$$
along geodesics $\gamma$ in $M_1\setminus M$ connecting points on $\p M$ and $\p M_1$, and using the fact that $[w,\phi]|_{\p M_1}=0$, we recover $[w,\phi]|_{\p M}$.

For $(x,v)\in SM_1$, let $\ell(x,v)$ be the first positive time when $\gamma_{x,v}(\ell(x,v))\in\p M_1$. Then for any $(x,v)\in SM_1$ such that $\gamma_{x,v}([0,\ell(x,v)])\cap M=\varnothing$,
$$
[w,\phi](x,v)=\widetilde U_a(x,v)\int_0^{\ell(x,v)}\widetilde U_a^{-1}(\gamma_{x,v}(t),\dot \gamma_{x,v}(t))\,\mathcal S_a \mathcal E_{M_1}[f,\alpha](\gamma_{x,v}(t),\dot\gamma_{x,v}(t))\,dt.
$$
Following the similar arguments used to derive the estimate (28) in \cite[page 456]{stefanov2004stability}, one can prove the following estimate
$$
\|[w,\phi]\|_{L^2(M_1\setminus M;\Lambda^1_{M_1}\times\C)}\le C\|d_a[w,\phi]\|_{L^2(M_1\setminus M;S^2_{M_1}\times \Lambda^1_{M_1})}\le C \|\mathcal S_a \mathcal E_{M_1}[f,\alpha]\|_{L^2(M_1\setminus M;S^2_{M_1}\times \Lambda^1_{M_1})}.
$$
Then using Korn's inequality in \cite[Corollary~5.12.3]{taylor2011partial},
\begin{align*}
\|[w,\phi]\|_{H^1(M_1\setminus M;\Lambda^1_{M_1}\times\C)}&\le C\big(\|d_a[w,\phi]\|_{L^2(M_1\setminus M;S^2_{M_1}\times \Lambda^1_{M_1})}+\|[w,\phi]\|_{L^2(M_1\setminus M;S^2_{M_1}\times \Lambda^1_{M_1})}\big)\\
&\le C \|\mathcal S_a \mathcal E_{M_1}[f,\alpha]\|_{L^2(M_1\setminus M;S^2_{M_1}\times \Lambda^1_{M_1})}.
\end{align*}
Applying trace theorem and using $[w,\phi]|_{\p M_1}=0$, this implies
$$
\|[w,\phi]\|_{H^{1/2}(\p M;\Lambda^1_M\times\C)}\le C\|\mathcal S_a \mathcal E_{M_1}[f,\alpha]\|_{L^2(M_1\setminus M;S^2_{M_1}\times \Lambda^1_{M_1})}.
$$
Since, in particular, $[w,\phi]\in H^1(M;\Lambda^1_M\times\C)$ solves $\delta_a d_a[w,\phi]=0$ in $M$, by estimate \eqref{stability for solution of elliptic bvp} in Proposition~\ref{solvability of elliptic bvp} we come to
$$
\|[w,\phi]\|_{H^{1}(M;\Lambda^1_M\times\C)}\le C\|\mathcal S_a \mathcal E_{M_1}[f,\alpha]\|_{L^2(M_1\setminus M;S^2_{M_1}\times \Lambda^1_{M_1})}.
$$
Using this together with \eqref{QN=Id+K} and \eqref{solenoidal part of extension of f},
\begin{align*}
\|\cS_a[f,\alpha]\|_{L^2(M;S^2_{M}\times \Lambda^1_{M})}&\le C\|\mathcal S_a \mathcal E_{M_1}[f,\alpha]\|_{L^2(M_1;S^2_{M_1}\times \Lambda^1_{M_1})}\\
&\le C\big(\|Q\tbN_a[f,\alpha]\|_{H^1(M_1;S^2_{M_1}\times \Lambda^1_{M_1})}+\|K[f,\alpha\|_{L^2(M_1;S^2_{M_1}\times \Lambda^1_{M_1})}\big)\\
&\le C\big(\|\tbN_a[f,\alpha]\|_{L^2(M_1;S^2_{M_1}\times \Lambda^1_{M_1})}+\|[f,\alpha\|_{H^{-1}(M_1;S^2_{M_1}\times \Lambda^1_{M_1})}\big),
\end{align*}
where in the last step we used the fact that $K$ is a pseudodifferential operator of order $-1$ in $M_1^{\rm int}$. Since $Q$ is a pseudodifferential operator of order $1$ in $M_1^{\rm int}$, we come to the desired estimate in the statement.
\end{proof}

\begin{proof}[Proof of Theorem~\ref{thm::main 2}]
Part (a) follows from Proposition~\ref{prop::apriori estimate for N} and \cite[Proposition~5.3.1]{taylor1981pseudodifferential}.

To prove part (b), we need the following results. We use the notation
$$
\|(g,a)\|_{C^m(M;S^2_{M}\times \C)}:=\|g\|_{C^m(M; S^2_{M})}+\|a\|_{C^m(M; \C)}.
$$

\begin{Proposition}\label{small perturbation of Delta, S and P}
Given a Riemannian metric $\tilde g$ and $\tilde a\in C^\infty(M;\C)$, there exists sufficiently small $\varepsilon>0$ such that for any metric $g$ and $a\in C^\infty(M;\C)$ with $\|(g,a)-(\tilde g,\tilde a)\|_{C^1(M;S^2_{M}\times \C)}\le\varepsilon$,
$$
\|(-\Delta^D_{g,a})^{-1}-(-\Delta^D_{\tilde g,\tilde a})^{-1}\|\le C\varepsilon,\quad \|\cP_{g,a}-\cP_{\tilde g,\tilde a}\|\le C\varepsilon,\quad\|\cS_{g,a}-\cS_{\tilde g,\tilde a}\|\le C\varepsilon,
$$
with $C>0$ a locally uniform constant depending on $(\tilde g,\tilde a)$ only.
\end{Proposition}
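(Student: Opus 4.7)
The plan is to reduce all three estimates to a single perturbation estimate for the operator $-\Delta_{g,a}=\delta_a d_a:H^1_0(M;\Lambda^1_M\times\C)\to H^{-1}(M;\Lambda^1_M\times\C)$, and then propagate it through the explicit formulas
$$
(-\Delta^D_{g,a})^{-1},\qquad \cP_{g,a}=-d_a(-\Delta^D_{g,a})^{-1}\delta_a,\qquad \cS_{g,a}=\id-\cP_{g,a}.
$$
First I would show that $d_a$ and $\delta_a$ depend Lipschitz-continuously on $(g,a)$ in $C^1$. Writing them out in local coordinates, the coefficients of $d_a$ (resp.\ $\delta_a$) are polynomials in $a,g,g^{-1}$ and the Christoffel symbols (which are polynomial in $g,g^{-1},\p g$); all quantities whose $L^\infty$-norm is controlled by $\|(g,a)\|_{C^1}$. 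A straightforward comparison then yields
$$
\|d_a-d_{\tilde a}\|_{H^1\to L^2}+\|\delta_a-\delta_{\tilde a}\|_{L^2\to H^{-1}}\le C\varepsilon,\qquad \|-\Delta_{g,a}-(-\Delta_{\tilde g,\tilde a})\|_{H^1_0\to H^{-1}}\le C\varepsilon,
$$
with $C=C(\tilde g,\tilde a)$. Here I must also note that the $L^2$, $H^s$ norms computed with $g$ and with $\tilde g$ are equivalent with constants of the form $1+O(\varepsilon)$ because $d\Vol_g$ and the fibre inner products change continuously in $g$; this allows me to use the fixed reference-metric norms throughout.

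Next, I would invert. Let $A:=-\Delta^D_{\tilde g,\tilde a}$ and $B:=-\Delta^D_{g,a}$, viewed as bounded isomorphisms $H^1_0\to H^{-1}$. Proposition~\ref{solution operator for Dirichlet problem} produces both $A^{-1}$ and $B^{-1}$, but I need uniform control of $\|B^{-1}\|$. For this, factor $B=A\bigl(I-A^{-1}(A-B)\bigr)$ as operators $H^1_0\to H^1_0$; by the previous step $\|A^{-1}(A-B)\|_{H^1_0\to H^1_0}\le C\|A^{-1}\|\varepsilon<1/2$ for $\varepsilon$ small, so the Neumann series converges, giving $\|B^{-1}\|\le 2\|A^{-1}\|$ together with the resolvent identity
$$
B^{-1}-A^{-1}=B^{-1}(A-B)A^{-1},\qquad\text{hence}\qquad \|B^{-1}-A^{-1}\|_{H^{-1}\to H^1_0}\le C\varepsilon,
$$
with $C$ depending only on $(\tilde g,\tilde a)$. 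This is the first inequality in the statement.

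For $\cP_{g,a}-\cP_{\tilde g,\tilde a}$, I would telescope:
\begin{align*}
\cP_{g,a}-\cP_{\tilde g,\tilde a}
&=-(d_a-d_{\tilde a})(-\Delta^D_{g,a})^{-1}\delta_a
 -d_{\tilde a}\bigl[(-\Delta^D_{g,a})^{-1}-(-\Delta^D_{\tilde g,\tilde a})^{-1}\bigr]\delta_a\\
&\quad\;\,-d_{\tilde a}(-\Delta^D_{\tilde g,\tilde a})^{-1}(\delta_a-\delta_{\tilde a}).
\end{align*}
On $L^2\to L^2$, each of the three summands is a composition of bounded factors (the outer maps $d$ and $\delta$ lose/gain one derivative matched by the $H^{-1}\to H^1_0$ mapping of the inverse Laplace operator), and exactly one factor in each summand carries the $O(\varepsilon)$ smallness coming from the previous paragraphs. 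Hence $\|\cP_{g,a}-\cP_{\tilde g,\tilde a}\|_{L^2\to L^2}\le C\varepsilon$, and since $\cS=\id-\cP$, the same bound holds for $\cS_{g,a}-\cS_{\tilde g,\tilde a}$.

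The main obstacle I anticipate is keeping the constants locally uniform in $(\tilde g,\tilde a)$: the Neumann-series step requires that $\|A^{-1}\|\varepsilon$ be small, and the operator-norm equivalences among $g$- and $\tilde g$-dependent norms on $L^2$, $H^{\pm 1}$ must be tracked carefully (including in the $H^{1/2}$ trace and Korn inequality constants implicit in Proposition~\ref{solution operator for Dirichlet problem}). Once one checks that these constants vary continuously with $g\in C^1$, everything above goes through without further surprises.
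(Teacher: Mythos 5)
Your proposal is correct and follows essentially the same route as the paper: the resolvent identity $(-\Delta^D_{g,a})^{-1}-(-\Delta^D_{\tilde g,\tilde a})^{-1}=(-\Delta^D_{g,a})^{-1}(\Delta_{g,a}-\Delta_{\tilde g,\tilde a})(-\Delta^D_{\tilde g,\tilde a})^{-1}$, a $C^1$-perturbation bound on $\Delta_{g,a}$ obtained through the quadratic form $(d_a[u,\varphi],d_a[w,\phi])_{L^2}$, and a Neumann-series argument giving a uniform bound on $\|(-\Delta^D_{g,a})^{-1}\|$ for $\varepsilon$ small. Your explicit telescoping for $\cP_{g,a}-\cP_{\tilde g,\tilde a}$ just fills in a step the paper dispatches with ``this then can be used to prove the corresponding estimates.''
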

Here and in what follows, $\|\cdot\|$ denotes the operator norms for $L^2(M;S^2_M\times \Lambda^1_M)\to L^2(M;S^2_M\times \Lambda^1_M)$ and $H^{-1}(M;\Lambda^1_M\times \C)\to H^1_0(M;\Lambda^1_M\times \C)$.
\begin{proof}
Suppose that $g$ is a Riemannian metric and $a\in C^\infty(M;\C)$. Then
\begin{equation}\label{difference between inverse of laplacians}
(-\Delta^D_{g,a})^{-1}-(-\Delta^D_{\tilde g,\tilde a})^{-1}=(-\Delta^D_{g,a})^{-1}\big(\Delta_{g,a}-\Delta_{\tilde g,\tilde a}\big)(-\Delta^D_{\tilde g,\tilde a})^{-1}.
\end{equation}
For any $[u,\varphi],[w,\phi]\in H^1_0(M;\Lambda^1_M\times\C)$, we have
\begin{multline*}
|\<(\Delta_{g,a}-\Delta_{\tilde g,\tilde a})[u,\varphi],[w,\phi]\>|\\
=|(d_{\tilde g,\tilde a}[u,\varphi],d_{\tilde g,\tilde a}[w,\phi])_{L^2(M;S^2_M\times\Lambda^1_M)}-(d_{g,a}[u,\varphi],d_{g,a}[w,\phi])_{L^2(M;S^2_M\times\Lambda^1_M)}|\\
\le C\|(g,a)-(\tilde g,\tilde a)\|_{C^1(M;S^2_{M}\times \C)}\big(\|(g,a)\|_{C^1(M;S^2_{M}\times \C)}+\|(\tilde g,\tilde a)\|_{C^1(M;S^2_{M}\times \C)}\big)\\
\times\|[u,\varphi]\|_{H^1(M;\Lambda^1\times\C)}\|[w,\phi]\|_{H^1(M;\Lambda^1_M\times\C)}.
\end{multline*}
Hence
$$
\|\Delta_{g,a}-\Delta_{\tilde g,\tilde a}\|\le C\|(g,a)-(\tilde g,\tilde a)\|_{C^1(M;S^2_{M}\times \C)}\big(\|(g,a)\|_{C^1(M;S^2_{M}\times \C)}+\|(\tilde g,\tilde a)\|_{C^1(M;S^2_{M}\times \C)}\big).
$$
Suppose $\|(g,a)-(\tilde g,\tilde a)\|_{\cG^1(M)}\le\varepsilon$ for sufficiently small $\varepsilon>0$. Then
$$
\|(g,a)\|_{C^1(M;S^2_{M}\times \C)}\le\varepsilon+\|(\tilde g,\tilde a)\|_{C^1(M;S^2_{M}\times \C)}\quad\text{and}\quad \|(-\Delta^D_{g,a})^{-1}\|\le\|(-\Delta^D_{\tilde g,\tilde a})^{-1}\|\big(1+C\varepsilon\|(-\Delta^D_{\tilde g,\tilde a})^{-1}\|\big),
$$
where $C>0$ is a uniform constant in an $\varepsilon$-ball of $(\tilde g,\tilde a)$ in $\cG^1$-norm. Hence
$$
\|(-\Delta^D_{g,a})^{-1}\|\le C_1(1-C\varepsilon)^{-1}\quad\text{with}\quad C_1:=\|(-\Delta^D_{\tilde g,\tilde a})^{-1}\|.
$$
This together with \eqref{difference between inverse of laplacians} and, implies that
$$
\|(-\Delta^D_{g,a})^{-1}-(-\Delta^D_{\tilde g,\tilde a})^{-1}\|\le C\|(g,a)-(\tilde g,\tilde a)\|_{C^1(M;S^2_{M}\times \C)}\le C\varepsilon
$$
as desired. This then can be used to prove the corresponding estimates for $\cP_{g,a}$ and $\cS_{g,a}$.
\end{proof}

\begin{Proposition}\label{N and N' are close}
Let $(\widetilde M,g)$ be a simple manifold and let $a\in C^\infty(\widetilde M;\C)$. Suppose that a metric $\tilde g$ and $\tilde a\in C^\infty(\widetilde M;\C)$ satisfy $\|(g,a)-(\tilde g,\tilde a)\|_{C^3(\widetilde M;S^2_{\widetilde M}\times \C)}\le \varepsilon$ for sufficiently small $\varepsilon>0$. Then $(\widetilde M,\tilde g)$ is simple and the following estimate holds
$$
\|(\widetilde \bN_{g,a}-\widetilde \bN_{\tilde g,\tilde a})[f,\alpha]\|_{H^1(\widetilde M;S^2_{\widetilde M}\times \Lambda^1_{\widetilde M})}\le C\varepsilon\|[f,\alpha]\|_{L^2(M;S^2_M\times \Lambda^1_M)},\qquad [f,\alpha]\in L^2(M;S^2_M\times \Lambda^1_M)
$$
for some $C>0$ constant depending only on $(g,a)$.
\end{Proposition}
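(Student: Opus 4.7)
The plan is to first verify that simplicity survives the perturbation, then bound the operator difference using explicit integral representations from Section~\ref{section::normal operator} together with the pseudodifferential calculus. Simplicity consists of two conditions: strict convexity of $\p\widetilde M$ (controlled by the second fundamental form, which is $C^1$ in $g$ measured in $C^2$) and absence of conjugate points along boundary-to-boundary geodesics (controlled by the Jacobi equation, which depends continuously on $g$ in $C^2$). Both are open conditions, so if $\varepsilon$ is small enough, $(\widetilde M,\tilde g)$ is simple and, upon shrinking $\widetilde M$ slightly, one even has a uniform lower bound on the injectivity radius and exit time; these uniformities make the estimates that follow locally uniform in $(g,a)$.

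Next, I would write the four block operators $\tbN_{g,a}^{22}, \tbN_{g,a}^{21}, \tbN_{g,a}^{12}, \tbN_{g,a}^{11}$ in the explicit form given in the proof of Proposition~\ref{prop::ellipticity of N}. Each is built from four ingredients depending on $(g,a)$: the sphere-bundle measure $d\sigma_x$, the geodesic trajectory $\gamma_{x,v}(t)$ and its derivative $\dot\gamma_{x,v}(t)$, the exit times $\tilde\tau(x,\pm v)$, and the integrating factor $\widetilde U_a$. Standard ODE perturbation theory for the geodesic equation yields Lipschitz dependence of $\gamma_{x,v}$ in $g$ (and in each derivative of $g$) uniformly on compact subsets of $SM_1$; the implicit function theorem applied to a boundary defining function of $\p\widetilde M$ yields the corresponding statement for $\tilde\tau$; variation of parameters applied to $(X+a)\widetilde U_a=0$ (noting $\widetilde U_a|_{\p_+SM}=1$) yields the same for $\widetilde U_a$. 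In each case, controlling $k$ derivatives of the ingredient in $(x,v)$ requires $k+1$ derivatives of $(g,a)$ — hence the $C^3$ hypothesis (two derivatives of the kernel are needed for $H^1$ mapping estimates, as explained below).

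The last step is to convert the pointwise-on-$SM$ Lipschitz bound on the ingredients into an $L^2\to H^1$ operator bound for the difference. By the argument that established Proposition~\ref{prop::ellipticity of N}, the off-diagonal singularities of the Schwartz kernel of $\tbN_{g,a}$ near $y=x$ are of the form $|x-y|^{-(n-1)}$ times a symbol-amplitude that is a smooth function on $\widetilde M\times(T\widetilde M\setminus 0)$ built from the four ingredients above. The same structure holds for $\tbN_{g,a}-\tbN_{\tilde g,\tilde a}$, with the symbol-amplitude now bounded by $C\varepsilon$ in every $C^k$ seminorm of the form appearing in the $\Psi$DO calculus for symbols of order $-1$. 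Since a $\Psi$DO of order $-1$ maps $L^2\to H^1$ with operator norm controlled by finitely many such seminorms (see, e.g., \cite[Lemma~B.1]{dairbekov2007boundary} combined with standard $\Psi$DO continuity results), the difference satisfies $\|\tbN_{g,a}-\tbN_{\tilde g,\tilde a}\|_{L^2(M)\to H^1(\widetilde M)}\le C\varepsilon$, as desired.

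The principal obstacle is the book-keeping in the final paragraph: one must verify not only that the integrands defining the symbols are $C\varepsilon$-close pointwise, but that their derivatives in $x$ and in $v$ (the latter entering through the stationary-phase-type arguments used to extract the symbol) are also $C\varepsilon$-close. This is why $C^3$ closeness of $(g,a)$ is needed rather than merely $C^1$ or $C^2$, and the care required here mirrors that in the analogous results \cite{frigyik2008x,holman2010doppler,stefanov2004stability} for the unattenuated and $\cI_a$ cases. Once the symbol-level estimate is in place, the stated operator bound follows immediately.
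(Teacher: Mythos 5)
Your proposal is correct and follows essentially the same route as the paper, which simply cites the analogous results in the literature and notes that the key inputs are $C^2$-closeness of the geodesic flow generators and of the integrating factors $\widetilde U_a$, both consequences of the $C^3$ hypothesis. Your more detailed account — perturbation of the geodesic ODE, exit times and integrating factor, followed by the weakly singular kernel estimate giving the $L^2\to H^1$ bound — is exactly the argument carried out in those references.
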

\begin{proof}
This can be proven following similar arguments as in \cite[Proposition~5.1]{frigyik2008x} and \cite[Proposition~3]{holman2010doppler}; see also \cite[Theorem~8]{holman2013generic}. One needs to show is that the generators of the geodesic flows related to $g$ and $\tilde g$ are $C\varepsilon$ close in $C^2$. Also, one needs $\|\widetilde U_a-\widetilde U_{\tilde a}\|_{C^2(S\widetilde M)}\le C\varepsilon$. These follow from our assumption $\|(g,a)-(\tilde g,\tilde a)\|_{C^3(\widetilde M;S^2_{\widetilde M}\times \C)}\le \varepsilon$.
\end{proof}

For a given $[f,\alpha]\in L^2(M;S^2_M\times \Lambda^1_M)$, we use part (a), Proposition~\ref{small perturbation of Delta, S and P} and Proposition~\ref{N and N' are close},
\begin{multline*}
\|\cS_{\tilde g,\tilde a}[f,\alpha]\|_{L^2(M;S^2_M\times \Lambda^1_M)}\le \|\cS_{g,a}\cS_{\tilde g,\tilde a}[f,\alpha]\|_{L^2(M;S^2_M\times \Lambda^1_M)}+\|(\cS_{\tilde g,\tilde a}-\cS_{g,a})\cS_{\tilde g,\tilde a}[f,\alpha]\|_{L^2(M;S^2_M\times \Lambda^1_M)}\\
\le C\|\widetilde\bN_{g,a}\cS_{\tilde g,\tilde a}[f,\alpha]\|_{H^1(\widetilde M;S^2_{\widetilde M}\times \Lambda^1_{\widetilde M})}+C\varepsilon\|\cS_{\tilde g,\tilde a}[f,\alpha]\|_{L^2(M;S^2_M\times \Lambda^1_M)}\\
\le C\|\widetilde\bN_{\tilde g,\tilde a}\cS_{\tilde g,\tilde a}[f,\alpha]\|_{H^1(\widetilde M;S^2_{\widetilde M}\times \Lambda^1_{\widetilde M})}+C\varepsilon\|\cS_{\tilde g,\tilde a}[f,\alpha]\|_{L^2(M;S^2_M\times \Lambda^1_M)}\\
\quad+C\|(\widetilde\bN_{g,a}-\widetilde\bN_{\tilde g,\tilde a})\cS_{\tilde g,\tilde a}[f,\alpha]\|_{H^1(\widetilde M;S^2_{\widetilde M}\times \Lambda^1_{\widetilde M})}\\
\le C\|\widetilde\bN_{\tilde g,\tilde a}\cS_{\tilde g,\tilde a}[f,\alpha]\|_{H^1(\widetilde M;S^2_{\widetilde M}\times \Lambda^1_{\widetilde M})}+C\varepsilon\|\cS_{\tilde g,\tilde a}[f,\alpha]\|_{L^2(M;S^2_M\times \Lambda^1_M)}.
\end{multline*}
Since $\widetilde\bN_{\tilde g,\tilde a}\cS_{\tilde g,\tilde a}[f,\alpha]=\widetilde\bN_{\tilde g,\tilde a}[f,\alpha]$, fixing sufficiently small $\varepsilon>0$, we get
$$
\|\cS_{\tilde g,\tilde a}[f,\alpha]\|_{L^2(M;S^2_M\times \Lambda^1_M)}\le C\|\widetilde\bN_{\tilde g,\tilde a}[f,\alpha]\|_{H^1(\widetilde M;S^2_{\widetilde M}\times \Lambda^1_{\widetilde M})},
$$
where $C>0$ depends only on $(g,a)$.
\end{proof}

\section{Generic $s$-injectivity}\label{sctn::injectivity result}
The present section contains the proof of Theorem~\ref{thm::main 1}. The notation $\WF_A([f,\alpha])$ stands for the analytic wave front set of the pair $[f,\alpha]$; see \cite{sjostrand1982singularites,treves1980introduction}.

\begin{Proposition}\label{prop::analytic microlocal injectivity}
Suppose that a simple manifold $(M,g)$ and $a:M\to \C$ are real analytic. For a given $(x_0,\xi_0)\in T^*M^{\rm int}\setminus\{0\}$ let $\gamma_0$ be a geodesic through $x_0$ and normal to $\xi_0$. If $[f,\alpha]\in L^2(M;S^2_M\times \Lambda^1_M)$ satisfies $\bI_{a}[f,\alpha](\gamma)=0$ for all $\gamma$ near $\gamma_0$ and $\delta_a[f,\alpha]=0$ near $x_0$, then $(x_0,\xi_0)\notin \WF_A([f,\alpha])$.
\end{Proposition}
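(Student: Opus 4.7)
The plan is to combine Sj\"ostrand's complex stationary phase method with the analytic ellipticity of $\tbN_a$ on $a$-solenoidal pairs in the real analytic category. I would first show that $(x_0,\xi_0)\notin\WF_A(\tbN_a[f,\alpha])$. Extend $g$ and $a$ analytically to $\tM$ (possible since $\gamma_0\subset M^{\rm int}$, so the neighborhood of $\tM$ that is actually touched by nearby geodesics can be taken analytic). Because $[f,\alpha]$ is supported in $M$, the hypothesis $\bI_a[f,\alpha](\gamma)=0$ for $\gamma$ near $\gamma_0$ is equivalent to $\tbI_a[f,\alpha]=0$ on a neighborhood of the parameters of $\gamma_0$, so the integrand in the explicit formulas for $\tbN_a[f,\alpha](x)$ from the proof of Proposition~\ref{prop::ellipticity of N} vanishes at every $(x,v,t)$ whose associated geodesic $\gamma_{x,v}$ lies near $\gamma_0$. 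Testing $\tbN_a[f,\alpha]$ in a real analytic chart around $x_0$ against an analytic FBI-type transform $T_\lambda\varphi(x):=\int e^{i\lambda\Phi(x,y,\xi_0)}\chi(x,y;\lambda)\varphi(y)\,dy$, with a suitable complex phase $\Phi$ centered at $(x_0,\xi_0)$ and an analytic-symbol cutoff $\chi$, and then changing variables $y=\gamma_{x,v}(t)$, produces an oscillatory integral whose critical set consists precisely of $(x,v)$ with $x=x_0$ and $v$ lying in a direction conormal to $\xi_0$---exactly the parameters of geodesics through $x_0$ perpendicular to $\xi_0$. By the previous reduction the amplitude vanishes on an open neighborhood of this critical set, so Sj\"ostrand's complex stationary phase theorem yields $T_\lambda(\tbN_a[f,\alpha])(x_0)=O(e^{-c\lambda})$, i.e.\ $(x_0,\xi_0)\notin\WF_A(\tbN_a[f,\alpha])$.

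Second, I would invoke the analytic ellipticity of $\tbN_a$ on $a$-solenoidal pairs from Proposition~\ref{prop::ellipticity of N}. In the real analytic setting the parametrix construction leading to \eqref{QN+Q'C=1} can be carried out with $Q,Q'$ analytic pseudodifferential operators of order $+1$ and $K_1$ analytic regularizing near $(x_0,\xi_0)$. By the analytic pseudolocal property, $Q'd_a\Lambda\delta_a[f,\alpha]$ is analytic microlocally at $(x_0,\xi_0)$ since $\delta_a[f,\alpha]=0$ near $x_0$, and by the previous step the same holds for $Q\tbN_a[f,\alpha]$. The parametrix identity then forces $(x_0,\xi_0)\notin\WF_A([f,\alpha])$, as desired.

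The main obstacle will be the rigorous execution of the first step: one must verify that the combined phase produced by $\Phi$ and the geodesic change of variables satisfies the hypotheses of Sj\"ostrand's theorem (real critical set with nondegenerate transverse complex Hessian), and extract genuine exponential decay---not merely $O(\lambda^{-\infty})$---from the vanishing hypothesis by a contour deformation argument in the complex domain. The analogous phase analysis has been carried out for $I_0^k$ in~\cite{stefanov2008nonsimple} and for $\cI_a$ in~\cite{frigyik2008x,holman2010doppler,zhou2017generic}. The present setting introduces only additional polynomial weights in $v$ of bounded degree, coming from the tensorial structure of the pair $[f,\alpha]$, which do not affect the phase and therefore fit the same machinery with only cosmetic modifications.
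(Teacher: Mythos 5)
There is a genuine gap in your first step, and it is exactly the reason the paper does not argue through the normal operator here. The hypothesis only gives $\bI_a[f,\alpha](\gamma)=0$ for $\gamma$ in a neighborhood of the single geodesic $\gamma_0$, i.e.\ for $(x,v)$ with $v$ near $\pm\dot\gamma_0$. The $v$-integrand of $\tbN_a[f,\alpha](x)$ (via the adjoint formula) therefore vanishes only for those directions. But the critical set of your combined FBI/geodesic phase at $(x_0,\xi_0)$ is, as you yourself note, the set of $(x_0,v)$ with $v\perp\xi_0$ --- the full $(n-2)$-sphere $S_{x_0}M\cap\xi_0^{\perp}$. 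For $n\ge 3$ this contains directions far from $\pm\dot\gamma_0(0)$, on which you have no information, so the amplitude does \emph{not} vanish on a neighborhood of the critical set and complex stationary phase does not give $T_\lambda(\tbN_a[f,\alpha])(x_0)=O(e^{-c\lambda})$. Your argument would be sound under the stronger global hypothesis $\bI_a[f,\alpha]\equiv 0$ (this is essentially the route of \cite{stefanov2005boundary}, modulo the nontrivial additional work of showing that $\tbN_a$ is an \emph{analytic} pseudodifferential operator, which Proposition~\ref{prop::ellipticity of N} does not provide), but it does not prove the stated microlocal proposition with data localized near one geodesic.

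The paper's proof instead stays with the ray transform itself: it parametrizes geodesics near $\gamma_0$ by $(x',\theta')$, tests the vanishing integrals against $e^{i\lambda x'\cdot\xi'}\chi_N(x')$ and a complex Gaussian in $\xi$, applies Sj\"ostrand's complex stationary phase in $\xi$ to obtain \eqref{estimate for spacial integral of ray transform for x close to y version 3}, and then repeats this for $N=(n-1)+n(n-1)/2$ carefully chosen directions $v_0,\dots,v_{N-1}\perp\xi_0$ \emph{all close to} $v_0$ (so that the corresponding geodesics remain in the controlled neighborhood of $\gamma_0$), supplementing with the $n+1$ equations \eqref{f is divergence free}--\eqref{alpha is divergence free} coming from $\delta_a[f,\alpha]=0$. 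The resulting matrix symbol $\mathbf A$ is elliptic at $(0,0,\xi_0)$, and a matrix-valued adaptation of \cite[Proposition~6.2]{sjostrand1982singularites} converts the system into the single scalar estimate characterizing $(x_0,\xi_0)\notin\WF_A([f,\alpha])$. If you want to salvage your plan, you would have to replace the normal operator by such a localized, overdetermined system built only from geodesics near $\gamma_0$; the pointwise ellipticity count (you need $n(n{+}1)/2+n$ independent linear conditions on $[f,\alpha]$, of which only $n+1$ come from the solenoidal condition) shows that a single direction, and hence a single FBI transform of $\tbN_a[f,\alpha]$, cannot suffice.
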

\begin{proof}
Without loss of generality, we assume that $\gamma_0:[\ell^-,\ell^+]\to \widetilde M$ with $\ell^-<0<\ell^+$, $x_0=\gamma_0(0)$ and $\gamma_0(\ell^-),\gamma_0(\ell^+)\in \widetilde M^{\rm int}\setminus M$. As it was explained in \cite[Section~2.1]{stefanov2008nonsimple}, we can work in a tubular neighborhood $U$ of $\gamma_0$ in $\widetilde M$ with analytic coordinates $x=(x',t)$, with $x'=(x^1,\dots,x^{n-1})$, so that $U=\{(x',t):|x'|<\varepsilon,\,\ell^--\varepsilon< t<\ell^++\varepsilon\}$ for some $0<\varepsilon\ll 1$, $x_0=0$ and $\gamma_0([\ell^-,\ell^+])=\{(0,\dots,0,t):t\in[\ell^-,\ell^+]\}$. If $\varepsilon>0$ is sufficiently small, we have $(x',\ell^-),(x',\ell^+)\in\widetilde M\setminus M$ for $|x'|<\varepsilon$. We also can assume that $g_{ij}(0)=\delta_{ij}$ and $\xi_0=(\xi'_0,0)$. Then $v_0:=\dot\gamma_0(0)=(0,\dots,0,1)$ and hence $\gamma_0=\gamma_{x_0,v_0}$.

We parameterize curves near $\gamma_0$ using the above mentioned analytic coordinates. For $|x'|<2\varepsilon/3$ and $|\theta'|\ll 1$, we write
$$
\gamma_{x',\theta'}(t):=\gamma_{(x',0),\omega(\theta')}(t),\qquad\omega(\theta'):=(\theta',1)/|(\theta',1)|,
$$
which will stay in $U$ for all $t\in [\ell^-,\ell^+]$ and $\gamma_{x',\theta'}(\ell^-),\gamma_{x',\theta'}(\ell^+)\in\widetilde M\setminus M$.

Following \cite{stefanov2008nonsimple}, we work with a sequence of cut-off functions $\chi_N\in C^\infty_0(\R^{n-1})$, $N\ge 1$ integer, such that $\chi_N(x')\equiv 1$ for $|x'|\le \varepsilon/3$, $\supp(\chi_N)\subset\{x'\in \R^{n-1}:|x'|<2\varepsilon/3\}$ and
\begin{equation}\label{ineq::estimates for derivatives of chi_N}
|\p^\alpha\chi_N(x')|\le (CN)^{|\alpha|}\quad\text{for all}\quad x'\in\R^{n-1}\quad\text{and}\quad|\alpha|<N,
\end{equation}
for some constant $C>0$ independent of $N$; see \cite[Lemma~1.1]{treves1980introduction} for the existence of such cut-off functions.

Let $\lambda>0$ be a large parameter and $\xi=(\xi',\xi^n)$ be in a sufficiently small complex neighborhood of $\xi_0$. Then for $|\theta'|\ll 1$, multiplying $\bI_a[f,\alpha](\gamma_{x',\theta'})=0$ by $e^{i\lambda x'\cdot\xi'}\chi_N(x')$ and integrating with respect to $x'$, we obtain
$$
\int e^{i\lambda x'\cdot\,\xi'}\chi_N(x')\int \tU_{-a}(\gamma_{x',\theta'},\dot \gamma_{x',\theta'})[f,\alpha](\gamma_{x',\theta'},\dot \gamma_{x',\theta'})\,dt\,dx'=0.
$$
Since $\supp(\chi_N)\subset\{x'\in \R^{n-1}:|x'|<2\varepsilon/3\}$, we can assume that local coordinates near $\gamma_0$ are given by $x=\gamma_{x',\theta'}(t)$ for fixed $|\theta'|\ll 1$. If $\theta'=0$ we clearly have $x=(x',t)$. By perturbation arguments, one can see that $(x',t)$ are analytic local coordinates which depend analytically on $\theta'$. As a result, we have $x=(x'+t\theta',t)+O(|\theta'|)$. In a sufficiently small complex neighborhood of $\xi_0$, we write $\theta'=\theta'(\xi)$ analytically depending on $\xi$ and such that $\theta'(\xi)\cdot\xi'=0$ and $\theta'(\xi_0)=0$.

Using these change of variables,
\begin{equation}\label{spacial integral of ray transform after change of variables}
\int e^{i\lambda \varphi(x,\xi)}u_N(x,\xi) [f,\alpha](x,b(x,\xi))\,dx=0,
\end{equation}
where $\varphi$ is the phase function given by
\begin{equation}\label{def of phase phi}
\varphi(x,\xi):=x'(x,\theta'(\xi))\cdot\,\xi'.
\end{equation}
The function $u_N$ and the vector field $b$ are both analytic for $x$ and $\xi$ near $\gamma_0$ and $\xi_0$, respectively. Moreover, $u_N$ vanishes outside $U$ and satisfies \eqref{ineq::estimates for derivatives of chi_N}. Also, $b(0,\xi)=\omega(\theta'(x,\xi))$ and $u_N(0,\xi)=\widetilde U_{-a}(0,\omega(\theta'(x,\xi)))$.

We need the following result which was proven in \cite{stefanov2008nonsimple}.
\begin{Lemma}\label{local uniqueness of critical point}
For the phase function $\varphi$, given by \eqref{def of phase phi}, there is $\delta>0$ such that if $\p_\xi\varphi(x,\xi)=\p_\xi\varphi(y,\xi)$ for some $x\in U$, $|y|<\delta$ and $|\xi-\xi_0|<\delta$, then $x=y$.
\end{Lemma}
Suppose $|y|<\delta$ and $|\eta-\xi_0|<\delta/2$. Consider $\rho\in C^\infty_0(\R^n)$ such that $\supp(\rho)\subset\{\xi\in\R^n:|\xi|<\delta\}$ and $\rho(\xi)=1$ for $|\xi|<\delta/2$. Multiplying \eqref{spacial integral of ray transform after change of variables} by
$$
\rho(\xi-\eta)e^{i\lambda\big(\frac i2(\xi-\eta)^2-\varphi(y,\xi)\big)}
$$
and integrating with respect to $\xi$, we obtain
\begin{equation}\label{spacial and directional integral of ray transform after change of variables}
\int\int e^{i\lambda \Phi(x,y,\xi,\eta)}U_N(x,\xi,\eta)[f,\alpha](x,b(x,\xi))\,dx\,d\xi=0,
\end{equation}
where
$$
\Phi(x,y,\xi,\eta):=\frac i2(\xi-\eta)^2+\varphi(x,\xi)-\varphi(y,\xi),\quad U_N(x,\xi,\eta):=\rho(\xi-\eta)u_N(x,\xi).
$$
According to Lemma~\ref{local uniqueness of critical point}, there is a constant $C_0>0$ such that the function $\xi\mapsto \Phi(x,y,\xi,\eta)$ has no critical points when $|x-y|>C_0\delta$. Therefore, we can estimate
\begin{equation}\label{estimate for integral of ray transform or x NOT close to y}
\int\int_{|x-y|>C_0\delta} e^{i\lambda \Phi(x,y,\xi,\eta)}U_N(x,\xi,\eta)[f,\alpha](x,b(x,\xi))\,dx\,d\xi=O((C'N/\lambda)^N+Ne^{-\lambda/C'})
\end{equation}
for some constant $C'>0$. To get the estimate \eqref{estimate for integral of ray transform or x NOT close to y}, we integrate by parts $N$ times with respect to $\xi$ using the identity
$$
e^{i\lambda \Phi(x,y,\xi,\eta)}=\frac{1}{i\lambda|\p_\xi\Phi|^2}\p_\xi \overline\Phi\cdot \p_\xi e^{i\lambda \Phi(x,y,\xi,\eta)}
$$
together with boundedness of $|\p_\xi\Phi|$ from below on the region of integration. We also used the facts that on the boundary of the region of integration, the function $e^{i\lambda \Phi(x,y,\xi,\eta)}$ is exponentially small in $\lambda$, and $U_N(x,\xi,\eta)$ satisfies an estimate like \eqref{ineq::estimates for derivatives of chi_N} in $\xi$.

To estimate the integral in \eqref{spacial and directional integral of ray transform after change of variables} for $|x-y|\le C_0\delta$, we study the critical points of $\xi\mapsto \Phi(x,y,\xi,\zeta)$. One can see that $\p_\xi \Phi(x,y,\xi,\eta)=i(\xi-\eta)+\p_\xi\varphi(x,\xi)-\p_\xi\varphi(y,\xi)$. If $x=y$, the function $\Phi$ has the unique critical point $\xi_c=\eta$ which is non-degenerate. By Lemma~\ref{local uniqueness of critical point}, the function $\Phi$ has at most one critical point $\xi_c=\xi_c(x,y,\eta)$, depending analytically on $x$, $y$ and $\eta$, if $|x-y|\le C_0\delta$. Furthermore, $\Im\big(\p_\xi^2\Phi(x,x,\eta,\xi)\big)=\id>0$ and hence $\Im\big(\p_\xi^2\Phi(x,y,\eta,\xi_c)\big)>0$ when $|x-y|\le C_0\delta$ for sufficiently small $C_0>0$. Taking $C_0>0$ possibly smaller, we can ensure that $U_N$ is still analytic and independent of $N$ within $|x-y|\le C_0\delta$. Therefore, using complex stationary phase method \cite[Theorem~2.8]{sjostrand1982singularites} (and Remark~2.10 in there) to \eqref{spacial and directional integral of ray transform after change of variables}, we get
\begin{equation}\label{estimate for spacial integral of ray transform for x close to y}
\begin{aligned}
\int\int_{|x-y|\le C_0\delta} e^{i\lambda \Phi(x,y,\xi,\eta)}U&_N(x,\xi,\eta)[f,\alpha](x,b(x,\xi))\,dx\,d\xi\\
&=\int_{|x-y|\le C_0\delta} e^{i\lambda \Psi(x,y,\eta)} U_N(x,\xi,\eta)[f,\alpha](x,b(x,\xi))\,dx+O(e^{-C''\lambda})
\end{aligned}
\end{equation}
for all $N>0$ and for some $C''>0$. Define
$$
\Psi(x,y,\eta):=\Phi(x,y,\xi_c,\eta).
$$
Then $\Psi(x,x,\eta)=0$, $\p_x\Psi(x,x,\eta)=\p_x\varphi(x,\eta)$, $\p_y\Psi(x,x,\eta)=-\p_x\varphi(x,\eta)$ and $\Im(\Psi(x,y,\eta))>|x-y|^2/C$. Combining \eqref{spacial and directional integral of ray transform after change of variables}--\eqref{estimate for spacial integral of ray transform for x close to y}, we get
\begin{equation}\label{estimate for spacial integral of ray transform for x close to y after xi_c}
\int_{|x-y|\le C_0\delta} e^{i\lambda \Psi(x,y,\eta)} u_N(x,y,\eta;\lambda) [f,\alpha](x,B(x,y,\eta))\,dx=O((C'N/\lambda)^N+Ne^{-C\lambda})
\end{equation}
for all $N>0$, where the function $u_N$ and the vector field $B$ are analytic. Note that the left side of \eqref{estimate for spacial integral of ray transform for x close to y} is in fact independent of $N$ on $|x-y|\le C_0\delta$. Then choosing $N$ such that $N\le\lambda/(C'e)\le N+1$, we get that the right side of \eqref{estimate for spacial integral of ray transform for x close to y after xi_c} is $O(e^{-\lambda/C})$.

It was shown in \cite{stefanov2008nonsimple} that $\p_\xi\p_y\varphi(0,\xi_0)=\id$ and hence $\varphi$ is a non-degenerate near $(0,\xi_0)$. Therefore, we can make a change of variables $(y,\eta)\mapsto \beta=(y,\zeta)$, with $\zeta:=\p_y\varphi(y,\eta)$, in a small enough neighborhood of $(0,\xi_0)$. Then plugging $\eta=\eta(\beta)$ in \eqref{estimate for spacial integral of ray transform for x close to y after xi_c}, we get
\begin{equation}\label{estimate for spacial integral of ray transform for x close to y version 2}
\int_{|x-y|\le C_0\delta} e^{i\lambda \tilde\Psi(x,\beta)}\tilde u(x,\beta;\lambda)[f,\alpha](x,\tilde B(x,\beta))\,dx=O(e^{-\lambda/C}),
\end{equation}
where $\tilde\Psi$, $\tilde u$ and $\tilde B$ are analytic and have the same properties as $\Psi$, $u_N$ and $B$. In particular,
$$
\tilde\Psi(x,x,\zeta)=0,\quad \p_x\tilde\Psi(x,x,\zeta)=\zeta,\quad \p_y\tilde\Psi(x,x,\zeta)=-\zeta.
$$
Define
$$
P^{ij}(x,\beta;\lambda):=\tilde u(x,\beta;\lambda)\tilde B^i(x,\beta)\tilde B^j(x,\beta)\quad Q^i(x,\beta;\lambda):=\tilde u(x,\beta;\lambda)\tilde B^i(x,\beta).
$$
Then \eqref{estimate for spacial integral of ray transform for x close to y version 2} can be rewritten as
\begin{equation}\label{estimate for spacial integral of ray transform for x close to y version 3}
\int_{|x-y|\le C_0\delta} e^{i\lambda \tilde\Psi(x,\beta)}\big(P^{ij}(x,\beta;\lambda)f_{ij}(x)+Q^i(x,\beta;\lambda)\alpha_i(x)\big)\,dx=O(e^{-\lambda/C}).
\end{equation}
Note that $\tilde B(0,0,\xi_0)=v_0=(0,\dots,0,1)$ and
\begin{align*}
\sigma_p(P^{ij})(0,0,\xi_0)&=\widetilde U_{-a}\big(0,\tilde B(0,0,\xi_0)\big)\tilde B^i(0,0,\xi_0)\tilde B^j(0,0,\xi_0)=\widetilde U_{-a}(0,v_0)\,v_0^i v_0^j,\\
\sigma_p(Q^i)(0,0,\xi_0)&=\widetilde U_{-a}\big(0,\tilde B(0,0,\xi_0)\big)\tilde B^i(0,0,\xi_0)=\widetilde U_{-a}(0,v_0)\,v_0^i.
\end{align*}

Let $v_0,v_1,\dots,v_{N-1}$ be $N=(n-1)+n(n-1)/2$ unit vectors at $x_0=0$ such that $v_k\perp\xi_0$, $k=0,\dots,N-1$, and any symmetric $2$-tensor $f$ and $1$-tensor $\alpha$ with $f_{ij}(\xi_0)^j=0$ for all $i=1,\dots,n$ and $\alpha_j(\xi_0)^j=0$, can be uniquely determined by $f_{ij}v^iv^j+\alpha_i v^i$, $v=v_0,\dots,v_{N-1}$; see \cite[Lemma~3.3]{dairbekov2007boundary}. Such vectors exist in any open set in $\xi_0^\perp$; see~\cite{stefanov2008nonsimple}. Hence, we can assume that $v_k$ is in a sufficiently small neighborhood of $v_0$. Moreover, for each geodesic $\gamma_{x_0,v_k}$, $k=0,\dots,N-1$, $\gamma_{x_0,v_k}([\ell^-,\ell^+])\subset U$ and $\gamma_{x_0,v_k}(\ell^-),\gamma_{x_0,v_k}(\ell^+)\in\widetilde M^{\rm int}\setminus M$. Then, after rotating the coordinate system so that $v_k=(0,\dots,0,1)$, we repeat the above construction and get $N$ phase functions $\tilde\Psi_k$ and symbols $P_k$, $k=0,\dots,N-1$, such that
\begin{equation}\label{estimate for spacial integral of ray transform for x close to y N times}
\int_{|x-y|\le C_0\delta} e^{i\lambda \tilde\Psi_k(x,\beta)}\big(P^{ij}_k(x,\beta;\lambda)f_{ij}(x)+Q^i_k(x,\beta;\lambda)\alpha_i(x)\big)\,dx=O(e^{-\lambda/C}) 
\end{equation}
for $k=0,\dots,N-1$, where $\Psi_0$, $P^{ij}_0$ and $Q^i_0$ are exactly those appearing in \eqref{estimate for spacial integral of ray transform for x close to y version 3}. Note also that
$$
\sigma_p(P^{ij}_k)(0,0,\xi_0)=\widetilde U_{-a}(0,v_k)\,v_k^i v_k^j,\quad \sigma_p(Q^i_k)(0,0,\xi_0)=\widetilde U_{-a}(0,v_k)\,v_k^i,\qquad k=0,\dots,N-1.
$$
We need $n+1$ more equations in order to turn \eqref{estimate for spacial integral of ray transform for x close to y N times} into an elliptic system of $n+1+N=n+n(n+1)/2$ equations. For this, recall that $\delta_a[f,\alpha](x)=0$ near $x_0=0$. Following \cite{stefanov2008nonsimple}, consider $\chi_0\in C^\infty_0(M^{\rm int})$ with $\chi_0\equiv 1$ near $x_0=0$. Then integrating $\frac{1}{\lambda}e^{i\lambda\tilde\Psi_0(x,\beta)}\chi_0(x)\delta_a[f,\alpha](x)=0$ with respect to $x$, applying the integration by parts and using $\Im(\tilde\Psi_0(x,\beta))>|x-y|^2/C$, we get
\begin{equation}\label{f is divergence free}
\int e^{i\lambda\tilde\Psi_0(x,\beta)} \big(R^{j}(x,\beta;\lambda)f_{ij}(x)+V(x,\beta;\lambda)\alpha_i(x)\big)\,dx=0,\quad i=1,\dots,n,
\end{equation}
and
\begin{equation}\label{alpha is divergence free}
\int e^{i\lambda\tilde\Psi_0(x,\beta)} \big(W^{ij}(x,\beta;\lambda)f_{ij}(x)+S^j(x,\beta;\lambda)\alpha_j(x)\big)\,dx=0,
\end{equation}
where $\sigma_p(R^{j})(0,0,\xi_0)=\sigma_p(S^{j})(0,0,\xi_0)=(\xi_0)^j$, $\sigma_p(V)(0,0,\xi_0)=0$ and $\sigma_p(W^{ij})(0,0,\xi_0)=0$.

Let us now write the system, consisting of \eqref{estimate for spacial integral of ray transform for x close to y N times}, \eqref{f is divergence free} and \eqref{alpha is divergence free}, as
\begin{equation}\label{A system}
\int_{|x-y|\le C}\diag(e^{i\lambda \tilde\Psi_0},\dots,e^{i\lambda \tilde\Psi_{N-1}},\underbrace{e^{i\lambda \tilde\Psi_0},\dots,e^{i\lambda \tilde\Psi_0}}_{(1+n)\text{-times}})(x,\beta)\mathbf A(x,\beta;\lambda)[f,\alpha](x)\,dx=O(e^{-\lambda/C}),
\end{equation}
where $\mathbf A(x,y,\zeta;\lambda)$ is a matrix valued symbol acting on $[f,\alpha]$.

Now, we prove that \eqref{A system} is elliptic at $(0,0,\xi_0)$. For this, suppose that $f$ is a symmetric $2$-tensor and $\alpha$ is a $1$-form such that $\sigma_p(\mathbf A)(0,0,\xi_0)=0$. Then by looking at principal symbols in \eqref{estimate for spacial integral of ray transform for x close to y N times}, \eqref{f is divergence free} and \eqref{alpha is divergence free}, one can see that this is equivalent to
$$
f_{ij}v_k^i v_k^j+\alpha_i v_k^i=0,\,\, k=0,\dots,N-1,\quad\text{and}\quad f_{ij}(\xi_0)^j=0,\,\, i=1,\dots,n,\quad \alpha_j(\xi_0)^j=0.
$$
Following the ideas as in the end of the proof of Proposition~\ref{prop::ellipticity of N}, we can show that this implies $f=0$ and $\alpha=0$.

Finally, we need to replace $\mathbf A$ in \eqref{A system} by the identity matrix $\bid$ and all phase functions by the same phase $\tilde\Phi_0$. Then this would show that $(0,\xi_0)=(x_0,\xi_0)\notin\WF_A([f,\alpha])$ in the sense of \cite[Definition~6.1]{sjostrand1982singularites}. For this, we need to modify the proof of \cite[Proposition~6.2]{sjostrand1982singularites} to the case of matrix-valued symbols following \cite{stefanov2008nonsimple}. Consider the operator $\Op(\mathbf A)$ given by
$$
\Op(\mathbf A)[f,\alpha](y)=\int\int \diag\big(e^{i\lambda(\tilde\Phi_j(y,\beta)-\overline{\tilde\Phi_j(x,\beta)})}\big) \mathbf A(x,\beta;\lambda)[f,\alpha](x)\,dx\,d\beta,
$$
where $\tilde \Phi_j=\tilde \Psi_j$ for $j=0,\dots,N-1$ and $\tilde \Phi_j=\tilde \Psi_0$ for $j=N,\dots,N+n$. This is a pseudodifferential operator with an elliptic principal symbol. Therefore, there is an analytic classical matrix-valued symbol $\mathbf R(x,\beta;\lambda)$, defined near $(0,0,\xi_0)$, such that
$$
\Op(\mathbf A)\big(\mathbf R(\cdot,\beta;\lambda) e^{i\lambda \tilde\Phi_0}\big)(y)=\bid e^{i\lambda \tilde\Phi_0(y,\beta)}
$$
for $\beta$ in a neighborhood of $(0,\xi_0)$.  Following the same argument as is in the proof of \cite[Proposition~6.2]{sjostrand1982singularites}, we can show that $\bid e^{i\lambda \tilde\Phi_0}$ can be expressed as a superposition of $\mathbf A e^{i\lambda \tilde\Phi_0}$ modulo an exponentially decreasing function. Then the rest of the proof is identical to that of \cite[Proposition~6.2]{sjostrand1982singularites} which, with a possible new constant $C>0$, gives
$$
\int e^{i\lambda\tilde\Phi_0(x,\beta)}\chi(x)\bid [f,\alpha](x)\,dx=O(e^{-\lambda/C})
$$
for $\beta$ in a neighborhood of $(0,\xi_0)$ and for some cut-off function near $x_0=0$. This proves our claim that $(x_0,\xi_0)\notin \WF_A([f,\alpha])$.
\end{proof}

By $\cA(M;S^2_M\times \Lambda^1_M)$ and $\cA(M;\Lambda^1_M\times \C)$ we denote the space of real analytic pairs on $M$. Analogous notations are used on $\widetilde M$. We need the following two lemmas for the proof of Theorem~\ref{thm::main 1}.

\begin{Lemma}\label{lemma::If=0 implies f is analytic}
Assume that $(M,g)$ is a real analytic simple manifold and $a:M\to\C$ is real analytic. If $\bI_a [f,\alpha]=0$ with $[f,\alpha]\in L^2(M;S^2_M\times \Lambda^1_M)$, then $\cS_a[f,\alpha]\in\cA(M;S^2_M\times \Lambda^1_M)$.
\end{Lemma}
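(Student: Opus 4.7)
The plan is to set $[h,\beta]:=\cS_a[f,\alpha]$ and show $[h,\beta]\in\cA(M;S^2_M\times\Lambda^1_M)$ in two stages: interior analyticity via Proposition~\ref{prop::analytic microlocal injectivity}, then analyticity up to $\p M$ by transplanting the argument to the enlarged simple manifold $\widetilde M$ and removing the resulting potential by integration along geodesics.

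First I would use Proposition~\ref{decomposition of pairs} to write $[f,\alpha]=[h,\beta]+d_a[w,\phi]$ with $[w,\phi]\in H^1_0(M;\Lambda^1_M\times\C)$. Interpreting $d_a[w,\phi]$ as the function $(X+a)[w,\phi]$ on $SM$, the vanishing of $[w,\phi]$ on $\p M$ together with the identity $X(\widetilde U_a^{-1}[w,\phi])=\widetilde U_a^{-1}d_a[w,\phi]$ gives $\bI_ad_a[w,\phi]=0$ and hence $\bI_a[h,\beta]=\bI_a[f,\alpha]=0$; on the other hand $\delta_a[h,\beta]=0$ in $M^{\rm int}$ by construction of $\cS_a$. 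Fix $(x_0,\xi_0)\in T^*M^{\rm int}\setminus\{0\}$; simplicity of $(M,g)$ supplies a maximal geodesic $\gamma_0$ through $x_0$ with tangent orthogonal to $\xi_0$, and Proposition~\ref{prop::analytic microlocal injectivity} then yields $(x_0,\xi_0)\notin\WF_A([h,\beta])$. Hence $[h,\beta]$ is real analytic on $M^{\rm int}$.

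To promote this to analyticity on $M$ I would work in the enlarged simple manifold $\widetilde M\supset M$ of Section~\ref{section::normal operator}, to which $g$ and $a$ extend analytically. Decompose the zero-extension on $\widetilde M$ as
$$
\cE_{\widetilde M}[h,\beta]=[\tilde h,\tilde\beta]+d_a[\tilde w,\tilde\phi],\qquad [\tilde w,\tilde\phi]\in H^1_0(\widetilde M;\Lambda^1_{\widetilde M}\times\C),\quad\delta_a[\tilde h,\tilde\beta]=0\text{ in }\widetilde M^{\rm int}.
$$
For $(x,v)\in\p_+S\widetilde M$ whose geodesic enters $M$ at time $t_1$, the multiplicative property of the integrating factor $\widetilde U_a$ gives
$$
\tbI_a\cE_{\widetilde M}[h,\beta](x,v)=\exp\Big(\int_0^{t_1}a(\gamma_{x,v}(s))\,ds\Big)\bI_a[h,\beta](\gamma_{x,v}(t_1),\dot\gamma_{x,v}(t_1))=0,
$$
while $\tbI_ad_a[\tilde w,\tilde\phi]=0$ by the same reasoning as in the first paragraph, so $\tbI_a[\tilde h,\tilde\beta]=0$. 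Applying Proposition~\ref{prop::analytic microlocal injectivity} on the simple manifold $\widetilde M$ at every $(x_0,\xi_0)\in T^*\widetilde M^{\rm int}\setminus\{0\}$ yields $[\tilde h,\tilde\beta]\in\cA(\widetilde M^{\rm int};S^2_{\widetilde M}\times\Lambda^1_{\widetilde M})$, and in particular $[\tilde h,\tilde\beta]|_M$ is real analytic up to $\p M$.

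It remains to show $[\tilde w,\tilde\phi]|_M\in\cA(M;\Lambda^1_M\times\C)$, since $[h,\beta]=[\tilde h,\tilde\beta]|_M+d_a[\tilde w,\tilde\phi]|_M$. On $\widetilde M\setminus\overline M$ the equation $d_a[\tilde w,\tilde\phi]=-[\tilde h,\tilde\beta]$ reads $(X+a)[\tilde w,\tilde\phi]=-[\tilde h,\tilde\beta]$ along geodesics. For each $(y,v)\in SM|_{\p M}$ with $v$ outward-pointing, the geodesic $\gamma_{y,v}$ stays in $\widetilde M\setminus\overline M$ (for $\widetilde M$ sufficiently close to $M$) and terminates at $\p\widetilde M$; integrating against $\widetilde U_a^{-1}$ with the analytic boundary value $[\tilde w,\tilde\phi]|_{\p\widetilde M}=0$ recovers $[\tilde w,\tilde\phi](y,v)$ as an analytic function of $(y,v)$. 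Since $v\mapsto\tilde w_j(y)v^j+\tilde\phi(y)$ is affine in $v$, evaluating at enough outward directions separates $\tilde w(y)$ and $\tilde\phi(y)$ analytically, so $[\tilde w,\tilde\phi]|_{\p M}\in\cA(\p M;\Lambda^1_M\times\C)$. Inside $M$, $[\tilde w,\tilde\phi]$ solves the analytic elliptic Dirichlet problem $-\Delta_{g,a}[\tilde w,\tilde\phi]=0$ in $M^{\rm int}$ with this analytic boundary data, and Morrey--Nirenberg analytic boundary regularity gives $[\tilde w,\tilde\phi]|_M\in\cA(M;\Lambda^1_M\times\C)$, completing the proof. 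I expect this final boundary recovery step to be the main obstacle, as it requires carefully separating the 1-form and scalar components of $[\tilde w,\tilde\phi]$ from their directional traces along outgoing geodesics and then invoking analytic elliptic boundary regularity for the coupled system $-\Delta_{g,a}$.
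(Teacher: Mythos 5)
Your proposal follows the same overall route as the paper (interior analyticity from Proposition~\ref{prop::analytic microlocal injectivity}, passage to the enlarged simple manifold $\widetilde M$, recovery of the potential's trace on $\p M$ by integrating $d_a[\tilde w,\tilde\phi]=-[\tilde h,\tilde\beta]$ along geodesics in $\widetilde M\setminus M$, then analytic elliptic regularity for the Dirichlet problem in $M$), and indeed your $[\tilde h,\tilde\beta]$, $[\tilde w,\tilde\phi]$ coincide with the paper's $\cS_a\cE_{\widetilde M}[f,\alpha]$, $[w_{\widetilde M},\phi_{\widetilde M}]$ up to the (vanishing-on-$\p M$) potential of the decomposition on $M$. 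However, there is one genuine gap. You only establish $[\tilde h,\tilde\beta]\in\cA(\widetilde M^{\rm int};S^2_{\widetilde M}\times\Lambda^1_{\widetilde M})$, i.e.\ \emph{interior} analyticity, and then assert that
$$
[\tilde w,\tilde\phi](y,v)=\widetilde U_a(y,v)\int_0^{\ell(y,v)}\widetilde U_a^{-1}(\gamma_{y,v}(t),\dot\gamma_{y,v}(t))\,[\tilde h,\tilde\beta](\gamma_{y,v}(t),\dot\gamma_{y,v}(t))\,dt
$$
is analytic in $(y,v)$. The integration runs all the way to $t=\ell(y,v)$, where the geodesic meets $\p\widetilde M$ (that is where the boundary condition $[\tilde w,\tilde\phi]|_{\p\widetilde M}=0$ is imposed, so you cannot stop at an intermediate hypersurface where the trace is unknown). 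Analyticity of the integrand on the half-open segment $[0,\ell)$ plus continuity at the endpoint does not yield analyticity of the integral in the parameters: one needs the integrand analytic on a full neighborhood of the closed region, i.e.\ $[\tilde h,\tilde\beta]$ real analytic on $\widetilde M\setminus M^{\rm int}$ \emph{up to $\p\widetilde M$}. Interior analyticity in $\widetilde M^{\rm int}$ does not give this.

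The paper closes exactly this gap before integrating along geodesics: since $\supp\cE_{\widetilde M}[h,\beta]\subseteq M$, the potential $[\tilde w,\tilde\phi]$ satisfies $-\Delta_{g,a}[\tilde w,\tilde\phi]=0$ in $\widetilde M\setminus M$ with $[\tilde w,\tilde\phi]|_{\p\widetilde M}=0$, and Morrey's analytic regularity up to the boundary (as in \cite[Lemma~3]{stefanov2005boundary}, using \cite{morrey1957analyticity}) gives $[\tilde w,\tilde\phi]$, hence $[\tilde h,\tilde\beta]=-d_a[\tilde w,\tilde\phi]$, real analytic up to $\p\widetilde M$. You already invoke Morrey--Nirenberg for the Dirichlet problem inside $M$ at the last step, so the tool is available; you simply need to apply it once more, in the annulus $\widetilde M\setminus M$ at the outer boundary, before the geodesic-integration step. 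With that insertion your argument matches the paper's proof.
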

\begin{proof}
According to Proposition~\ref{prop::analytic microlocal injectivity}, $\cS_a[f,\alpha]$ is analytic in $M^{\rm int}$. Our aim is to show that it is analytic up to $\p M$. For this, consider extension $\widetilde M$ of $M$ as in Section~\ref{section::normal operator} which can be chosen to be real analytic. We also extend $g$ and $a$ to $\widetilde M$ to be real analytic and so that $(\widetilde M,g)$ is simple.

First, we show that $\cS_a\cE_{\widetilde M}[f,\alpha]\in \cA(\widetilde M;S^2_{\widetilde M}\times \Lambda^1_{\widetilde M})$. According to the assumption $\bI_a[f,\alpha]=0$, we have $\tbI_a \cS_a\cE_{\widetilde M}[f,\alpha]=0$. Applying Proposition~\ref{prop::analytic microlocal injectivity} to $\widetilde M$, $\cS_a\cE_{\widetilde M}[f,\alpha]$ is real analytic in $\widetilde M^{\rm int}$. Thus, we need to show that $\cS_a\cE_{\widetilde M}[f,\alpha]$ is real analytic up to $\p \widetilde M$. Observe that $\cS_a\cE_{\widetilde M}[f,\alpha]=-d_a[w_{\widetilde M},\phi_{\widetilde M}]$ in $\widetilde M\setminus M$, where $[w_{\widetilde M},\phi_{\widetilde M}]:=\cP_a\cE_{\widetilde M}[f,\alpha]\in H^1(\widetilde M;\Lambda^1_{\widetilde M}\times\C)$. In particular, $[w_{\widetilde M},\phi_{\widetilde M}]$ satisfies $(-\Delta_{g,a})[w_{\widetilde M},\phi_{\widetilde M}]=0$ in $\widetilde M\setminus M$ with $[w_{\widetilde M},\phi_{\widetilde M}]|_{\p \widetilde M}=0$. Then $[w_{\widetilde M},\phi_{\widetilde M}]$ is real analytic up to $\p \widetilde M$; this can be done as in \cite[Lemma~3]{stefanov2005boundary} using results of \cite{morrey1957analyticity}. This gives $\cS_a\cE_{\widetilde M}[f,\alpha]\in \cA(\widetilde M;S^2_{\widetilde M}\times \Lambda^1_{\widetilde M})$.

Next, we compare $\cS_a\cE_{\widetilde M}[f,\alpha]$ and $\cS_a[f,\alpha]$. Since $d_a[w_{\widetilde M},\phi_{\widetilde M}]=-\cS_a\cE_{\widetilde M}[f,\alpha]$ in $\widetilde M\setminus M$ and $\cS_a\cE_{\widetilde M}[f,\alpha]\in \cA(\widetilde M;S^2_{\widetilde M}\times \Lambda^1_{\widetilde M})$, the pair $d_a[w_{\widetilde M},\phi_{\widetilde M}]$ is real analytic in $\widetilde M\setminus M$ (up to $\p \widetilde M$). Integrating
$$
\frac{d}{dt}\Big(\widetilde U^{-1}_a(\gamma(t),\dot\gamma(t))[w_{\widetilde M},\phi_{\widetilde M}](\gamma(t),\dot\gamma(t))\Big)=\widetilde U^{-1}_a(\gamma(t),\dot\gamma(t))d_a[w_{\widetilde M},\phi_{\widetilde M}](\gamma(t),\dot\gamma(t))
$$
along geodesics $\gamma$ in $\widetilde M\setminus M$ connecting points on $\p M$ and $\p \widetilde M$, we get real analyticity of $[w_{\widetilde M},\phi_{\widetilde M}]|_{\p M}$ on $\p M$.

Since $\cS_a[f,\alpha]=[f,\alpha]-d_a[w,\phi]$ in $M$ and $\cS_a\cE_{\widetilde M}[f,\alpha]=\cE_{\widetilde M}[f,\alpha]-d_a[w_{\widetilde M},\phi_{\widetilde M}]$ in $\widetilde M$, we can write
\begin{equation}\label{eqn::expression for solenoidal part of zero extension}
\cS_a[f,\alpha]=\cS_a\cE_{\widetilde M}[f,\alpha]+d_a[v,\varphi]\quad\text{in}\quad M
\end{equation}
where $[w,\phi]:=\cP_a [f,\alpha]$ and $[v,\varphi]:=[w_{\widetilde M},\phi_{\widetilde M}]-[w,\phi]$. Then $[v,\varphi]$ is a solution for
$$
-\Delta_{g,a}[v,\varphi]=0\quad\text{in}\quad M,\qquad [v,\varphi]|_{\p M}=[w_{\widetilde M},\phi_{\widetilde M}]|_{\p M}.
$$
Since $[w_{\widetilde M},\phi_{\widetilde M}]|_{\p M}$ is real analytic on $\p M$, we have $[v,\varphi]\in\cA(M;\Lambda^1_M\times\C)$. By \eqref{eqn::expression for solenoidal part of zero extension}, this implies $\cS_a[f,\alpha]\in\cA(M;S^2_M\times \Lambda^1_M)$.
\end{proof}

\begin{Lemma}\label{lemma::modifying f to have zero jet}
Suppose that $a\in C^\infty(M;\C)$. If $\bI_a [f,\alpha]=0$ with $[f,\alpha]\in C^\infty(M;S^2_M\times \Lambda^1_M)$, then there is $[w,\phi]\in C^\infty(M;\Lambda^1_M\times\C)$, with $[w,\phi]|_{\p M}=0$, such that for $[\tilde f,\tilde \alpha]:=[f,\alpha]-d_a[w,\phi]$ we have
\begin{equation}\label{eqn::modifying f to have zero jet}
\p^m [\tilde f,\tilde \alpha]|_{\p M}=0
\end{equation}
for all multi-indices $m$, and in boundary normal coordinates,
\begin{equation}\label{eqn::tilde f_in=tilde alpha_n=0}
\tilde f_{i n}=\tilde \alpha_n=0,\quad i=1,\dots,n.
\end{equation}
\end{Lemma}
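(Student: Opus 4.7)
The plan proceeds in two main stages: first construct $[w,\phi]$ via a normal-direction ODE to achieve \eqref{eqn::tilde f_in=tilde alpha_n=0}, then exploit $\bI_a[\tilde f,\tilde\alpha]=0$ together with the transport equation to inductively eliminate all boundary jets.

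In boundary normal coordinates $(x',x^n)$ near $\partial M$, one has $g_{nn}\equiv 1$, $g_{in}\equiv 0$ for $i<n$, and consequently $\Gamma^k_{nn}=\Gamma^n_{in}=0$. A direct computation shows that the conditions $\tilde f_{nn}=\tilde f_{in}=\tilde\alpha_n=0$ (for $i<n$) for $[\tilde f,\tilde\alpha]:=[f,\alpha]-d_a[w,\phi]$ are equivalent to the linear first-order system
\begin{align*}
\partial_n w_n &= f_{nn}-a\phi,\\
\partial_n \phi &= \alpha_n - a w_n,\\
\partial_n w_i &= 2f_{in}-\partial_i w_n + 2\sum_{k<n}\Gamma^k_{in} w_k,\qquad i=1,\ldots,n-1,
\end{align*}
to be solved with initial data $(w,\phi)|_{\partial M}=0$. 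The system decouples: first solve the $2\times 2$ ODE (in $x^n$) for $(w_n,\phi)$, then the lower-triangular $(n{-}1)\times(n{-}1)$ ODE for $(w_1,\ldots,w_{n-1})$. Both admit unique smooth solutions in a collar neighborhood of $\partial M$. Multiplying by a smooth cutoff supported in the collar extends them to $[w,\phi]\in C^\infty(M;\Lambda^1_M\times\C)$ with $[w,\phi]|_{\partial M}=0$, and \eqref{eqn::tilde f_in=tilde alpha_n=0} holds in the collar.

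A short direct calculation shows that $d_a[w,\phi]$, viewed as a function on $SM$, equals $(X+a)([w,\phi])$, where $[w,\phi](x,v)=w_j(x)v^j+\phi(x)$; indeed, $X(w_jv^j)=(d^s w)(v,v)$ along geodesics and $g(v,v)=1$ on $SM$. Since $[w,\phi]|_{\partial M}=0$, the integrating-factor identity $e^{\int_0^t a\,ds}(X+a)h=\tfrac{d}{dt}(e^{\int_0^t a\,ds}h)$ yields $\bI_a d_a[w,\phi]=0$, whence $\bI_a[\tilde f,\tilde\alpha]=0$. Thus the solution $\tilde u$ of $X\tilde u+a\tilde u=-[\tilde f,\tilde\alpha]$ on $SM$ with $\tilde u|_{\partial_-SM}=0$ also vanishes on $\partial_+SM$, hence on all of $\partial(SM)$. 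The structural vanishing \eqref{eqn::tilde f_in=tilde alpha_n=0} is precisely what is needed for $\tilde u$ to extend smoothly across the glancing set, so $\tilde u\in C^\infty(SM)$; this is classical for simple manifolds.

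Finally, \eqref{eqn::modifying f to have zero jet} follows by induction on $|m|$. Restricting the transport equation and its tangential derivatives to the glancing set $\partial_0 SM=\{(x,v):x\in\partial M,\,v\in T_x\partial M\}$ expresses $[\tilde f,\tilde\alpha](x,v)$, for enough tangent directions $v$, in terms of $X\tilde u$ and its tangential derivatives on $\partial SM$, all of which vanish. Combined with \eqref{eqn::tilde f_in=tilde alpha_n=0} and the spanning argument used at the end of the proof of Proposition~\ref{prop::ellipticity of N}, this gives $[\tilde f,\tilde\alpha]|_{\partial M}=0$. The inductive step applies $\partial_n^k$ to the transport equation, evaluates on $\partial_0 SM$, and peels off one normal derivative at a time using the induction hypothesis. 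The main obstacle is precisely this induction: the combinatorics of commuting $\partial_n$ through $X$ on $\partial_0 SM$ is delicate, and the structural condition \eqref{eqn::tilde f_in=tilde alpha_n=0} is essential — without it the leading-order terms of the recursion do not close and the recursion fails already at $|m|=1$.
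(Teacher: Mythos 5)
Your first stage coincides with the paper's: the same boundary normal coordinate identities $g_{in}=\delta_{in}$, $\Gamma^n_{in}=\Gamma^i_{nn}=0$, the same decoupled ODE system (the $2\times 2$ system for $(w_n,\phi)$ followed by the $(n-1)$ remaining equations) with zero initial data, and the cutoff to globalize. That part is fine.

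The second stage is where you diverge from the paper and where there is a genuine gap. The paper does \emph{not} run a jet induction on the transport equation. It observes that $\bI_a[f,\alpha]=0$ gives $\tbN_a[\tilde f,\tilde\alpha]=0$ in $\widetilde M^{\rm int}$, hence $\tbN_a\cE_{\widetilde M}[\tilde f,\tilde\alpha]$ is smooth near $M$; it then checks that $\tbN_a$, restricted to pairs satisfying \eqref{eqn::tilde f_in=tilde alpha_n=0}, is elliptic for $\xi_n\neq 0$, so that $N^*(\p M)\cap\WF(\cE_{\widetilde M}[\tilde f,\tilde\alpha])=\varnothing$; since the zero extension vanishes identically on one side of $\p M$, absence of conormal singularities forces $\p^m_{x^n}[\tilde f,\tilde\alpha]|_{x^n=0}=0$ for all $m$. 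Your alternative — evaluating $\p_n^k$ of the transport equation on the glancing set — works cleanly only at order zero: there $X$ is tangent to $\p(SM)$, so $X\tilde u$ is determined by $\tilde u|_{\p(SM)}=0$ and you recover $[\tilde f,\tilde\alpha](x,v)=0$ for glancing $v$, which together with \eqref{eqn::tilde f_in=tilde alpha_n=0} and the spanning argument gives $[\tilde f,\tilde\alpha]|_{\p M}=0$. But at order one the identity $\p_n(X\tilde u)=X(\p_n\tilde u)+[\p_n,X]\tilde u$ brings in $\p_n\tilde u|_{\p(SM)}$, which is \emph{not} known to vanish, so the recursion does not close as written; you acknowledge this (``the combinatorics \dots is delicate'') but never resolve it, and this is precisely the content of the lemma beyond the base case. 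A secondary issue: the smoothness of $\tilde u$ across the glancing set is not a consequence of the structural condition \eqref{eqn::tilde f_in=tilde alpha_n=0} as you assert; it is a nontrivial regularity theorem whose hypothesis is $\bI_a[\tilde f,\tilde\alpha]=0$, and it needs a citation or proof. To repair your argument you would either have to carry out the jet induction in full (controlling the unknown normal derivatives of $\tilde u$ on $\p(SM)$, e.g.\ by also differentiating the structural conditions), or switch to the paper's wavefront-set argument, which bypasses the induction entirely.
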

\begin{proof}
We start with construction of $[\tilde f,\tilde \alpha]$ satisfying \eqref{eqn::tilde f_in=tilde alpha_n=0}. Let $(x',x^n)$ be a boundary normal coordinate near a boundary point, i.e. $x^n>0$ in $M^{\rm int}$ and $x^n=0$ defines $\p M$. In these coordinates, we have
$$
g_{in}=\delta_{in},\quad \Gamma_{in}^n=\Gamma^i_{nn}=0,\quad i=1,\dots,n.
$$
Then \eqref{eqn::tilde f_in=tilde alpha_n=0} is equivalent to
\begin{equation}\label{eqn::system of ODEs}
\frac12(\p_i w_n+\p_n w_i-2\Gamma_{in}^j w_j)+a\phi\delta_{in}=f_{in},\quad \p_n\phi+aw_n=\alpha_n,\quad i=1,\dots,n.
\end{equation}
We solve the first system by setting $i=n$ and solving the system of ODEs
$$
\p_n w_n+a\phi=f_{nn},\quad \p_n\phi+aw_n=\alpha_n
$$
with the initial conditions $w_n|_{x^n=0}=\phi|_{x^n=0}=0$. Then we solve the remaining system of $(n-1)$-ODEs with initial conditions
$$
\p_n w_\imath-2\Gamma_{\imath n}^\kappa w_\kappa=2f_{\imath n}-\p_\imath w_n\quad\text{and}\quad w_\imath|_{x^n=0}=0,\quad \imath=1,\dots,n,
$$
where $\kappa$ runs from $1$ to $n-1$. This gives the construction of $[w,\phi]$ near $\p M$. Multiplying $[w,\phi]$ by a proper cut-off function we can assume that $[w,\phi]$ is globally defined on $M$. Then we define $[\tilde f,\tilde \alpha]:=[f,\alpha]-d_a[w,\phi]$.

Next we show that $[\tilde f,\tilde \alpha]$ satisfies \eqref{eqn::modifying f to have zero jet}. By the assumption $\bI_a [f,\alpha]=0$ we have $\tbN_a [\tilde f,\tilde \alpha]=0$ in $\widetilde M^{\rm int}$. Hence, $\tbN_a [\tilde f,\tilde \alpha]$ is smooth near $M$. One can check that $\tbN_a$ is elliptic for $\xi_n\neq 0$ if it is restricted to the pairs satisfying \eqref{eqn::tilde f_in=tilde alpha_n=0}. This, in particular, gives that $N^*(\p M)\cap \WF(\cE_{\widetilde M}[\tilde f,\tilde \alpha])=\varnothing$. Since $\cE_{\widetilde M}[\tilde f,\tilde \alpha]=0$ in $\widetilde M\setminus M$, we get that $\p^m_{x^n}[\tilde f,\tilde \alpha]|_{x^n=0}=0$ for all $m\ge 0$. This implies \eqref{eqn::modifying f to have zero jet} as desired.
\end{proof}

Now, we are ready to prove Theorem~\ref{thm::main 1}.

\begin{proof}[Proof of Theorem~\ref{thm::main 1}]
Suppose that $[f,\alpha]\in L^2(M;S^2_M\times \Lambda^1_M)$ is in the kernel of $\bI_a$ and satisfies $[f,\alpha]=\cS_a [f,\alpha]$. Then $[f,\alpha]\in\cA(M;S^2_M\times \Lambda^1_M)$ by Lemma~\ref{lemma::If=0 implies f is analytic}. According to Lemma~\ref{lemma::modifying f to have zero jet}, we can find $[w,\phi]\in C^\infty(M;\Lambda^1_M\times\C)$, with $[w,\phi]|_{\p M}=0$, such that $[\tilde f,\tilde \alpha]:=[f,\alpha]-d_a[w,\phi]$ satisfies \eqref{eqn::modifying f to have zero jet} and \eqref{eqn::tilde f_in=tilde alpha_n=0}. Since $a$ and $[f,\alpha]$ are real analytic on $M$, and $[w,\phi]$ solves the equation \eqref{eqn::system of ODEs} in the boundary normal coordinates, then $[w,\phi]$, and hence, $[\tilde f,\tilde \alpha]$ are real analytic near $\p M$. The pair $[\tilde f,\tilde \alpha]$ vanishes in a neighborhood of $\p M$ in $M$, since $[\tilde f,\tilde \alpha]$ vanishes to infinite order on $\p M$. Therefore, $[f,\alpha]=d_a[w,\phi]$ near $\p M$.

Now, following exactly the same approach as the proof of \cite[Theorem~1]{stefanov2008nonsimple}, we can show that $[w,\phi]$ admits an analytic continuation $[w_0,\phi_0]\in\cA(M;\Lambda^1_M\times\C)$ from a neighborhood of $\p M$ in $M$ to $M$ such that $[f,\alpha]=d_a[w_0,\phi_0]$. Since $[w_0,\phi_0]|_{\p M}=0$ and $[f,\alpha]=\cS_a [f,\alpha]$, this gives that $[f,\alpha]=0$.
\end{proof}


\bibliographystyle{abbrv}
\bibliography{../../Bibliography/Bibliography}

\begin{thebibliography}{10}

\bibitem{abhishek2017support}
A.~Abhishek and R.~K. Mishra.
\newblock Support theorems and an injectivity result for integral moments of a
  symmetric $m$-tensor field.
\newblock {\em preprint arXiv:1704.02010}, 2017.

\bibitem{ainsworth2013attenuated}
G.~Ainsworth.
\newblock The attenuated magnetic ray transform on surfaces.
\newblock {\em Inverse Problems \& Imaging}, {\bf 7}(1):27--46, 2013.

\bibitem{ainsworth2015range}
G.~Ainsworth and Y.~M. Assylbekov.
\newblock On the range of the attenuated magnetic ray transform for connections
  and {H}iggs fields.
\newblock {\em Inverse Problems \& Imaging}, {\bf 9}(2), 2015.

\bibitem{assylbekov2017polyharmonicadmissible}
Y.~M. Assylbekov and Y.~Yang.
\newblock Determining the first order perturbation of a polyharmonic operator
  on admissible manifolds.
\newblock {\em Journal of Differential Equations}, {\bf 262}(1):590--614, 2017.

\bibitem{assylbekov2017kerrinverse}
Y.~M. Assylbekov and T.~Zhou.
\newblock Direct and inverse problems for the nonlinear time-harmonic {M}axwell
  equations in {K}err-type media.
\newblock {\em preprint, arXiv:1709.07767}, 2017.

\bibitem{budinger1979emission}
T.~Budinger, G.~Gullberg, and R.~Huesman.
\newblock Emission computed tomography.
\newblock {\em Image reconstruction from projections}, pages 147--246, 1979.

\bibitem{chung2017hodge}
F.~J. Chung, M.~Salo, and L.~Tzou.
\newblock Partial data inverse problems for the {H}odge {L}aplacian.
\newblock {\em Analysis \& PDE}, {\bf 10}(1):43--93, 2017.

\bibitem{dairbekov2007boundary}
N.~S. Dairbekov, G.~P. Paternain, P.~Stefanov, and G.~Uhlmann.
\newblock The boundary rigidity problem in the presence of a magnetic field.
\newblock {\em Advances in Mathematics}, {\bf 216}(2):535--609, 2007.

\bibitem{dairbekov2011conformal}
N.~S. Dairbekov and V.~A. Sharafutdinov.
\newblock On conformal {K}illing symmetric tensor fields on {R}iemannian
  manifolds.
\newblock {\em Siberian Advances in Mathematics}, {\bf 21}(1):1--41, 2011.

\bibitem{duvaut1976inequalities}
G.~Duvaut and J.~L. Lions.
\newblock {\em Inequalities in mechanics and physics}.
\newblock Springer, 1976.

\bibitem{ferreira2009limiting}
D.~D.~S. Ferreira, C.~E. Kenig, M.~Salo, and G.~Uhlmann.
\newblock Limiting {C}arleman weights and anisotropic inverse problems.
\newblock {\em Inventiones Mathematicae}, {\bf 178}(1):119--171, 2009.

\bibitem{frigyik2008x}
B.~Frigyik, P.~Stefanov, and G.~Uhlmann.
\newblock The {X}-ray transform for a generic family of curves and weights.
\newblock {\em Journal of Geometric Analysis}, {\bf 18}(1):89--108, 2008.

\bibitem{ghosh2017inverse}
T.~Ghosh and S.~Bhattacharyya.
\newblock Inverse boundary value problem of determining up to second order
  tensors appear in the lower order perturbations of the polyharmonic operator.
\newblock {\em preprint arXiv:1706.03823}, 2017.

\bibitem{guillarmou2016negconnections}
C.~Guillarmou, G.~P. Paternain, M.~Salo, and G.~Uhlmann.
\newblock The {X}-ray transform for connections in negative curvature.
\newblock {\em Communications in Mathematical Physics}, 343(1):83--127, 2016.

\bibitem{guillemin1979some}
V.~Guillemin and S.~Sternberg.
\newblock Some problems in integral geometry and some related problems in
  micro-local analysis.
\newblock {\em American Journal of Mathematics}, {\bf 101}(4):915--955, 1979.

\bibitem{holman2013generic}
S.~Holman.
\newblock Generic local uniqueness and stability in polarization tomography.
\newblock {\em The Journal of Geometric Analysis}, {\bf 1}(23):229--269, 2013.

\bibitem{holman2010doppler}
S.~Holman and P.~Stefanov.
\newblock The weighted {D}oppler transform.
\newblock {\em Inverse Problems \& Imaging}, {\bf 4}(1):111--130, 2010.

\bibitem{juhlin1992principles}
P.~Juhlin.
\newblock Principles of {D}oppler tomography.
\newblock {\em LUTFD2/(TFMA-92)/7002 P}, {\bf 17}, 1992.

\bibitem{kenig2011inverse}
C.~E. Kenig, M.~Salo, and G.~Uhlmann.
\newblock Inverse problems for the anisotropic {M}axwell equations.
\newblock {\em Duke Mathematical Journal}, {\bf 157}(2):369--419, 2011.

\bibitem{krupchyk1702inverse}
K.~Krupchyk and G.~Uhlmann.
\newblock Inverse problems for magnetic schr{\"o}dinger operators in
  transversally anisotropic geometries, preprint (2017).
\newblock {\em preprint arXiv:1702.07974}.

\bibitem{krupchyk2017inverse}
K.~Krupchyk and G.~Uhlmann.
\newblock Inverse problems for advection diffusion equations in admissible
  geometries.
\newblock {\em preprint arXiv:1704.05598}, 2017.

\bibitem{mclean2000strongly}
W.~C.~H. McLean.
\newblock {\em Strongly elliptic systems and boundary integral equations}.
\newblock Cambridge University Press, 2000.

\bibitem{melrose1994spectral}
R.~B. Melrose.
\newblock Spectral and scattering theory for the laplacian on asymptotically
  euclidian spaces.
\newblock {\em Lecture Notes in Pure and Applied Mathematics}, pages 85--85,
  1994.

\bibitem{michel1981rigidite}
R.~Michel.
\newblock Sur la rigidit{\'e} impos{\'e}e par la longueur des
  g{\'e}od{\'e}siques.
\newblock {\em Inventiones Mathematicae}, {\bf 65}(1):71--83, 1981.

\bibitem{monard2016inversion}
F.~Monard.
\newblock Inversion of the attenuated geodesic {X}-ray transform over functions
  and vector fields on simple surfaces.
\newblock {\em SIAM Journal on Mathematical Analysis}, {\bf 48}(2):1155--1177,
  2016.

\bibitem{monard2017efficient}
F.~Monard.
\newblock Efficient tensor tomography in fan-beam coordinates. {II}:
  {A}ttenuated transforms.
\newblock {\em to appear in Inverse Problems \& Imaging, arXiv:1704.08294},
  2017.

\bibitem{morrey1957analyticity}
C.~Morrey and L.~Nirenberg.
\newblock On the analyticity of the solutions of linear elliptic systems of
  partial differential equations.
\newblock {\em Communications on Pure and Applied Mathematics}, {\bf
  10}(2):271--290, 1957.

\bibitem{paternain2012attenuated}
G.~P. Paternain, M.~Salo, and G.~Uhlmann.
\newblock The attenuated ray transform for connections and {H}iggs fields.
\newblock {\em Geometric and Functional Analysis}, {\bf 22}(5):1460--1489,
  2012.

\bibitem{paternain2013tensor}
G.~P. Paternain, M.~Salo, and G.~Uhlmann.
\newblock Tensor tomography on surfaces.
\newblock {\em Inventiones Mathematicae}, {\bf 193}(1):229--247, 2013.

\bibitem{paternain2014tensor}
G.~P. Paternain, M.~Salo, and G.~Uhlmann.
\newblock Tensor tomography: Progress and challenges.
\newblock {\em Chinese Annals of Mathematics, Series B}, {\bf 35}(3), 2014.

\bibitem{paternain2015invariant}
G.~P. Paternain, M.~Salo, and G.~Uhlmann.
\newblock Invariant distributions, {B}eurling transforms and tensor tomography
  in higher dimensions.
\newblock {\em Mathematische Annalen}, {\bf 363}(1-2):305--362, 2015.

\bibitem{paternain2016geodesic}
G.~P. Paternain, M.~Salo, G.~Uhlmann, and H.~Zhou.
\newblock The geodesic {X}-ray transform with matrix weights-ray transform with
  matrix weights.
\newblock {\em preprint arXiv:1605.07894}, 2016.

\bibitem{sadiq2016tensortransform}
K.~Sadiq, O.~Scherzer, and A.~Tamasan.
\newblock On the {X}-ray transform of planar symmetric 2-tensors.
\newblock {\em Journal of Mathematical Analysis and Applications}, {\bf
  442}(1):31--49, 2016.

\bibitem{salo2011attenuated}
M.~Salo and G.~Uhlmann.
\newblock The attenuated ray transform on simple surfaces.
\newblock {\em Journal of Differential Geometry}, {\bf 88}(1):161--187, 2011.

\bibitem{sharafutdinov2007variations}
V.~Sharafutdinov.
\newblock Variations of {D}irichlet-to-{N}eumann map and deformation boundary
  rigidity of simple 2-manifolds.
\newblock {\em Journal of Geometric Analysis}, {\bf 17}(1):147--187, 2007.

\bibitem{sharafutdinov1994integral}
V.~A. Sharafutdinov.
\newblock {\em Integral geometry of tensor fields}.
\newblock VSP, Utrecht, The Netherlands, 1994.

\bibitem{shubin2001pseudodifferential}
M.~Shubin.
\newblock {\em Pseudodifferential {O}perators and {S}pectral {T}heory}.
\newblock Springer Science \& Business Media, 2001.

\bibitem{sjostrand1982singularites}
J.~Sj{\"o}strand.
\newblock {\em Singularit{\'e}s analytiques microlocales}, volume~82.
\newblock Soci{\'e}t{\'e} Math{\'e}matique de France, 1982.

\bibitem{stefanov2008microlocal}
P.~Stefanov.
\newblock Microlocal approach to tensor tomography and boundary and lens
  rigidity.
\newblock {\em Serdica Mathematical Journal}, {\bf 34}(1):67p--112p, 2008.

\bibitem{stefanov2008sharp}
P.~Stefanov.
\newblock A sharp stability estimate in tensor tomography.
\newblock In {\em Journal of Physics: Conference Series}, volume~{\bf 124},
  page 012007. IOP Publishing, 2008.

\bibitem{stefanov2004stability}
P.~Stefanov and G.~Uhlmann.
\newblock Stability estimates for the {X}-ray transform of tensor fields and
  boundary rigidity.
\newblock {\em Duke Mathematical Journal}, {\bf 123}(3):445--467, 2004.

\bibitem{stefanov2005boundary}
P.~Stefanov and G.~Uhlmann.
\newblock Boundary rigidity and stability for generic simple metrics.
\newblock {\em Journal of the American Mathematical Society}, {\bf
  18}(4):975--1003, 2005.

\bibitem{stefanov2008boundary}
P.~Stefanov and G.~Uhlmann.
\newblock Boundary and lens rigidity, tensor tomography and analytic microlocal
  analysis.
\newblock {\em Algebraic Analysis of Differential Equations, Fetschrift in
  Honor of Takahiro Kawai, edited by T. Aoki, H. Majima, Y. Katei and N. Tose},
  pages 275--293, 2008.

\bibitem{stefanov2008nonsimple}
P.~Stefanov and G.~Uhlmann.
\newblock Integral geometry of tensor fields on a class of non-simple
  {R}iemannian manifolds.
\newblock {\em American Journal of Mathematics}, {\bf 130}(1):239--268, 2008.

\bibitem{stefanov2014inverting}
P.~Stefanov, G.~Uhlmann, and A.~Vasy.
\newblock Inverting the local geodesic {X}-ray transform on tensors.
\newblock {\em preprint, arXiv:1410.5145}, 2014.

\bibitem{taylor2011partial}
M.~Taylor.
\newblock {\em Partial differential equations {I: B}asic theory}, volume~{\bf
  115} of {\em Applied Mathematical Sciences}.
\newblock Springer-Verlag New York, 2011.

\bibitem{taylor1981pseudodifferential}
M.~E. Taylor.
\newblock Pseudodifferential operators.
\newblock volume 34 of Princeton Mathematical Series, 1981.

\bibitem{treves1980introduction}
F.~Tr{\`e}ves.
\newblock {\em Introduction to pseudodifferential and {F}ourier integral
  operators}.
\newblock Springer Science \& Business Media, 1980.

\bibitem{uhlmann2016inverse}
G.~Uhlmann and A.~Vasy.
\newblock The inverse problem for the local geodesic ray transform.
\newblock {\em Inventiones Mathematicae}, {\bf 205}(1):83--120, 2016.

\bibitem{zhou2017generic}
H.~Zhou.
\newblock Generic injectivity and stability of inverse problems for
  connections.
\newblock {\em Communications in Partial Differential Equations}, {\bf
  42}(5):780--801, 2017.

\end{thebibliography}
\end{document}